\documentclass[a4paper,11pt,leqno]{article}

\usepackage[latin1]{inputenc}
\usepackage[T1]{fontenc} 
\usepackage[english]{babel}
\usepackage{verbatim}
\usepackage{calligra}
\usepackage{enumerate}
\usepackage{dsfont}
\usepackage{amsfonts}
\usepackage{amsmath}
\usepackage{amsthm}
\usepackage{amssymb}
\usepackage{mathtools}
\usepackage{mathrsfs}
\usepackage{color}
\usepackage{graphicx}
\usepackage{bm}
  \usepackage[all]{xy}
\usepackage{hyperref}
\hypersetup{
    colorlinks=true,
    linkcolor=black,
    filecolor=black,      
    urlcolor=black,
}

\usepackage{tikz}

\usepackage{graphicx}		
\usepackage[hypcap=false]{caption}


\DeclareMathOperator*{\tend}{\longrightarrow}

\DeclareMathOperator*{\D}{\rm{div}}

\DeclareMathOperator*{\limss}{\overline{\rm lim}}

\theoremstyle{definition}
\newtheorem{defi}{Definition}

\newtheorem{rmk}[defi]{Remark}

\theoremstyle{plain}
\newtheorem{thm}[defi]{Theorem}
\newtheorem{prop}[defi]{Proposition}
\newtheorem{cor}[defi]{Corollary}
\newtheorem{lemma}[defi]{Lemma}

\newcommand{\mc}{\mathcal}

\newcommand{\what}{\widehat}

\newcommand{\R}{\mathbb{R}}
\newcommand{\Q}{\mathbb{Q}}

\newcommand{\Z}{\mathbb{Z}}

\renewcommand{\P}{\mathbb{P}}

\newcommand{\curl}{{\rm curl}\,}

\newcommand{\dx}{ \, {\rm d} x}
\newcommand{\dt}{ \, {\rm d} t}

\newcommand{\al}{\alpha}
\newcommand{\bt}{\beta}

\allowdisplaybreaks

\textheight = 700pt
\textwidth = 460pt
\voffset = -70pt
\hoffset = -40pt


\begin{document}

\title{\textsc{\Large{\textbf{Bounded Solutions in Incompressible Hydrodynamics}}}}

\author{\normalsize\textsl{Dimitri Cobb}\vspace{.5cm} \\
\footnotesize{\textsc{Universität Bonn}} \\
{\footnotesize \it Mathematisches Institut} \vspace{.1cm} \\
{\footnotesize Endenicher Allee 60, 53115 Bonn, Germany} \vspace{.1cm} \\
\footnotesize{\ttfamily{cobb@math.uni-bonn.de}}
}

\date\today

\maketitle

\subsubsection*{Abstract}
{\footnotesize In this article, we study bounded solutions of Euler-type equations on $\R^d$ which have no integrability at $|x| \rightarrow +\infty$. As has been previously noted, such solutions fail to achieve uniqueness in an initial value problem, even under strong smoothness conditions. This contrasts with well-posedness results that have been obtained by using the Leray projection operator in these equations. This apparent paradox is solved by noting that using the Leray projector requires an extra condition the solutions must fulfill at $|x| \rightarrow + \infty$. Our goal is to find one such condition which is sharp. We then apply the methods we develop to prove a full uniqueness result for Besov-Lipschitz solutions, as to  the theory of Serfati solutions. In the last Section, we see how these techniques also apply to the Elsässer variables used in ideal MHD.
}

\paragraph*{\small 2010 Mathematics Subject Classification:}{\footnotesize 35Q35 
(primary);
35A02, 
35B30, 
35Q31, 
35S30  
(secondary).}

\paragraph*{\small Keywords: }{\footnotesize Leray projection, incompressible fluids, bounded solutions, Euler equations, far field condition.}

{\footnotesize
\tableofcontents
}

\section{Introduction}

In this article, we study some specific features of incompressible fluids evolving in unbounded domains. In order to illustrate the methods we develop, we will focus mainly on the Euler equations, which concentrates the difficulties of the problem. However, we will also show how the principles we present can also be applied to a broader class of equations by applying them to the study of Elsässer variables in ideal magnetohydrodynamics.

\medskip

We are concerned with \textsl{bounded} solutions to systems of PDEs describing incompressible fluid mechanics. The simplest of these equations is the incompressible (homogeneous) Euler system, which reads
\begin{equation}\label{ieq:c2Euler}
\begin{cases}
\partial_t u + \D (u \otimes u) + \nabla \pi = 0\\
\D (u) = 0.
\end{cases}
\end{equation}
This system is set on the whole space $\R^d$, and its unknowns are the velocity field $u(t, x) \in \R^d$ and the pressure field $\pi(t, x) \in \R$. As we will see, the issues raised in this article are very specific to the non-compact nature of $\R^d$, so we will not be interested by flows defined on the torus $\mathbb{T}^d$.

\medskip

Our concern is the resolution of the Cauchy problem for \eqref{ieq:c2Euler}. Most PDEs require a form of boundary condition to be well-posed, and the Euler system is no different. But when working on the whole space $\R^d$, which has no boundary, one must rather require conditions at infinity in the form of a far field condition on the velocity: for example, we might demand the fluid to be at rest at infinity in some inertial reference frame given by a fixed vector $V \in \R^d$,
\begin{equation*}
u(t, x) \tend V \qquad \text{as } |x| \rightarrow + \infty.
\end{equation*}
The absolute necessity of such a condition is made obvious by a simple example: consider $f \in C^\infty (\R ; \R^d)$ and define a flow by
\begin{equation}\label{ieq:c2Unif}
u(t, x) = f(t) \qquad \text{ and } \qquad \pi(t, x) = - f'(t) \cdot x.
\end{equation}
Then the couple $(u, \pi)$ is a solution of \eqref{ieq:c2Euler}. But taking $f$ to be any function that is compactly supported away from $t = 0$, we construct an infinity of solutions that correspond to the initial data $u(0) = 0$. And this lack of uniqueness cannot be dismissed as an artifact of the Galilean invariance of the equations: there is no inertial reference frame in which the fluid \eqref{ieq:c2Unif} is always at rest at infinity.

\medskip

Note that this issue is of a different nature than the $C^m$ or $C^1\cap L^2$ ill-posedness displayed in \cite{BL} and \cite{EM}. For instance as the authors of \cite{EM} rely on singular integral operators not mapping $C^0 \cap L^2$ to $L^\infty$, while our example \eqref{ieq:c2Unif} hinges on a lack of far field-conditions.

\medskip

The questions we ask in this chapter are the following: \textsl{What type of far field condition should a (smooth) solution of} \eqref{ieq:c2Euler} \textsl{satisfy to be uniquely determined by the initial data? Can such a condition be ``optimal'', in the sense that any other condition has to be stronger?}

\medskip

Before diving into the bulk of this article, let us note that bounded solutions of infinite energy, such as \eqref{ieq:c2Unif}, are of specific interest in fluid mechanics: in particular, the space $W^{1, \infty}$ of Lipschitz functions is essentially\footnote{More precisely, it shoule be noted that Yudovich or Serfati solutions are \textsl{log-Lipschitz}, but this does not really undermine our point: it is not possible to work much below Lipschitz regularity.} the largest space in which one can hope to solve most equations of ideal fluid mechanics --the first equation in \eqref{ieq:c2Euler} is nearly a transport equation. So far, all well-posedness results have been proved for solutions in spaces strictly embedded in $W^{1, \infty}$ (or in the space $LL$ of $\log$-Lipschitz functions).

In addition, infinite energy solutions have also been at the center of much attention in the past twenty years, and present their own interesting challenges (see \cite{G} for an introduction to these questions), so that a precise theory of non-integrable solutions seems quite desirable.

\subsection{Role of the Pressure and Leray Projection}

The analysis of this problem hinges on our understanding of the pressure force $- \nabla \pi$. An intuitive reason $\pi$ has this key role can be seen from our counter-example. In \eqref{ieq:c2Unif}, the flow is subject to a forcing: a pressure differential coming from $|x| \rightarrow + \infty$ induces the motion of the fluid. And of course, any fluid that is under the influence of an arbitrary forcing cannot have its dynamics solely determined by initial data.

\medskip

Now, although the pressure is formally one of the unknowns of the system, it is entirely determined (up to a constant) by the velocity: ultimately, only the velocity field really matters when solving the system. In fact, we may formally compute $\nabla \pi$ as a function of $u$ by taking the divergence of the first equation in \eqref{ieq:c2Euler} and solving the elliptic equation thus produced: we obtain
\begin{equation}\label{ieq:c2PressureForce}
\nabla \pi = \nabla (- \Delta)^{-1} \sum_{j, k} \partial_j \partial_k (u_j u_k).
\end{equation}
From the previous equation, we introduce the Leray projection operator
\begin{equation*}
\P = {\rm Id} + \nabla (- \Delta)^{-1} \D
\end{equation*}
which may be seen as the $L^2$-orthogonal projector on the subspace of divergence-free functions. In other words, we may see the pressure term in $\nabla \pi$ in \eqref{ieq:c2Euler} as an orthogonal projection enforcing the divergence-free condition $\D(u) = 0$ at all times. Applying $\P$ to \eqref{ieq:c2Euler}, we get a new equation
\begin{equation}\label{ieq:c2PEuler2}
\partial_t u + \P \D (u \otimes u) = 0,
\end{equation}
which is (up to a commutator term) a transport equation.

\medskip

The importance of this new equation lies in that, unlike the original Euler system \eqref{ieq:c2Euler}, the projected equation \eqref{ieq:c2PEuler2} is \textsl{well-posed} in a space of (regular enough) bounded functions: for example, Pak and Park prove in 2004 \cite{PP} that for any Besov-Lipschitz initial datum $u_0 \in B^1_{\infty, 1}$, there is a unique (local) solution $u \in C^0([0, T[ ; B^1_{\infty, 1})$ for some $T > 0$. 

However, we see that systems \eqref{ieq:c2Euler} and \eqref{ieq:c2PEuler2} are not equivalent. For instance, the uniform flow $u(t, x) = f(t)$ of \eqref{ieq:c2Unif} is a solution of \eqref{ieq:c2Euler} but not of \eqref{ieq:c2PEuler2}. While this may seem a bit surprising, general unpleasantness must be expected when dealing simultaneously with constant functions, as in \eqref{ieq:c2Unif}, and Leray projection: constant functions are gradients and divergence-free functions at the same time.

The difference between \eqref{ieq:c2Euler} and  \eqref{ieq:c2PEuler2} is that the projected equations contain an implicit far field condition: the pressure force can only be expressed by Leray projection \eqref{ieq:c2PressureForce} under a (to be determined) far field condition. The obvious reason for this is the presence of the inverted Laplace operator $(- \Delta)^{-1}$, which has its range in a space of functions with no harmonic component. Because the pressure is given by a Poisson equation\footnote{From now on, we agree that there is an explicit summation on repeated indices. Here, for instance, there is a sum on $j, k = 1, ..., d$.}
\begin{equation*}
- \Delta \pi = \partial_j \partial_k (u_j u_k),
\end{equation*}
it is determined in $\mc S'$ up to the addition of a harmonic polynomial. For every $t$, there is a $Q(t) \in \R[X]$ such that
\begin{equation*}
\partial_t u + \P \D (u \otimes u) + \nabla Q = 0,
\end{equation*}
so that a solution $u$ of the original Euler system \eqref{ieq:c2Euler} solves the projected system \eqref{ieq:c2PEuler2} if and only if $\nabla Q = 0$.

\medskip

Regarding our question of whether a solution $u$ of \eqref{ieq:c2Euler} may be determined by the initial datum $u_0$ alone, we see it depends on \textsl{knowing exactly when a solution of the Euler equations is a solution of the projected system \eqref{ieq:c2PEuler2}}.

While this question, and the presence of the harmonic polynomial $Q$, is sometimes dismissed as being irrelevant, equation \eqref{ieq:c2PressureForce} giving the ``canonical'' choice of the pressure corresponding to a loosely stated far field condition, the question we ask (and solve) is much deeper, as we wish to know what property of the flow this choice is related to.

\begin{rmk}
It should be noted that our problem is due to the fact that the solutions $(u, \pi)$ are defined on the \textsl{non-compact} space $\R^d$. For the Euler problem set on the torus $\mathbb{T}^d$, the issue completely disappears. The pressure solves a Poisson equation, and hence is given by its Fourier coefficients
\begin{equation*}
\forall k \in \mathbb{Z}^d \backslash \{ 0 \}, \qquad \what{\pi}(k) = - \sum_{j, l} \frac{k_j k_l}{|k|^2} \what{u_j u_l} (k)
\end{equation*}
and a constant function (for $k = 0$), which is irrelevant as it does not change the value of the pressure force $- \nabla \pi$ (in other words, the mean value of the solution is preserved). Therefore, the velocity field does indeed solve the projected equation \eqref{ieq:c2PEuler2}. 

We point out that there is a slight difference between solutions defined on $\mathbb{T}^d$ and periodic flows on $\R^d$, as these last solutions, such as \eqref{ieq:c2Unif}, may be driven by an exterior pressure. In that case, the pressure force $- \nabla \pi$ is periodic, although the pressure itself is not.
\end{rmk}

\subsection{Previous Results}\label{ss:c2PreviousResults}

This question has been abundantly studied in the past twenty years, but mainly for the Navier-Stokes equations, although all existing results can be applied to the Euler system with suitable adaptations. We mention some of the notable advances on the topic. Throughout this paragraph, $(u, \pi)$ is a solution of the Euler system \eqref{ieq:c2Euler}.

\medskip

The first kind of result is a condition on the pressure $\pi$ for it to be given by \eqref{ieq:c2PressureForce}. For instance, if $\pi$ is of the form
\begin{equation*}
\pi = \pi_0 + \sum_{i, j} R_i R_j \pi_{ij},
\end{equation*}
where the $R_k = \partial_k (- \Delta)^{-1/2}$ are the Riesz transforms and $\pi_{ij}(t), \pi_0(t) \in L^\infty$, then \eqref{ieq:c2PressureForce} holds, as proven in 2000 by Giga, Inui, J. Kato and Matsui \cite{GIKM}. Later, J. Kato \cite{KatoJ} extended this result: in order for \eqref{ieq:c2PressureForce} to be true, one only needs $\pi(t) \in {\rm BMO}$. Other results include those of Kukavica and Vicol \cite{KV}, who require that $|\pi(t, x)| = o(|x|)$ as $|x| \rightarrow + \infty$, or Maremonti \cite{Maremonti} with a finite moment condition $(1 + |x|)^{d+1} \pi(t, x) \in L^1$. Finally, Nakai and Yoneda \cite{NY} introduce a condition based on Campanato-type spaces.

A second type of theorem shows that any solution $u$ of the Euler system \eqref{ieq:c2Euler} must be given by transformation of a solution $v$ of \eqref{ieq:c2PEuler2} through a ``generalized Galilean transform'':
\begin{equation}\label{ieq:c2GenGalInv}
u(t, x) = v\big(t, x - G(t) \big) + g(t) \qquad \text{where } G(t) = \int_0^t g(s) {\rm d}s.
\end{equation}
See for example the articles of Kukavica \cite{Kukavica} and Kukavica and Vicol \cite{KV}. The generalized Galilean transform places the fluid in an accelerated reference frame in which an inertial force appears. This force can be seen as a pressure differential $\nabla (g'(t) \cdot x)$, and interpreted as a forcing term.

Finally, a last type of result refers to the velocity field only: a far field condition based on $u(t, x)$ is sufficient to constrain the form of the pressure. For instance, in \cite{CF3}, we have proven that a loose integrability condition at $|x| \rightarrow +\infty$ is enough for \eqref{ieq:c2PressureForce} to be true. Similarly, the recent work of Fern\'andez-Dalgo and Lemarié-Rieusset \cite{FL}, which explores the problem with much detail, provides another sufficient condition based on a finite momentum condition: the flow is required to satisfy
\begin{equation}\label{ieq:c2FLcondition}
\forall \, T > 0, \qquad \int_0^T \int \frac{|u(t, x)|^2}{(1+|x|)^d} \dx \dt < + \infty.
\end{equation}
Finally, we point out a last result, which is given by Lemarié-Rieusset in \cite{LM} (point \textit{ii} of Theorem 11.1, pp. 109--111), where the flow is required to satisfy the Morrey-type condition
\begin{equation}\label{ieq:c2LMcondition}
\forall t_1 < t_2, \qquad \sup_{x \in \R^d} \frac{1}{\lambda^d} \int_{t_1}^{t_2} \int_{|x - y| \leq \lambda} \big|u(t, y) \big|^2 {\rm d} y \dt \tend_{\lambda \rightarrow + \infty} 0.
\end{equation}
These last conditions of \cite{CF3}, \cite{FL} and \cite{LM}, although they are very general, cannot be optimal (both sufficient and neccessary). There are solutions of the projected problem \eqref{ieq:c2PEuler2} which do not fulfill conditions \eqref{ieq:c2FLcondition}, \eqref{ieq:c2LMcondition} or the one in \cite{CF3}. For instance, these conditions fail for any smooth nonzero periodic solution of \eqref{ieq:c2PEuler2}, such as \textsl{e.g.} the constant flow $u(t, x) = Cst$.

\medskip

Finally, we refer to \cite{Kelliher} for various results concerning the uniqueness of Serfati flows (see \cite{Serfati}, \cite{AKLFNL}) as solutions of the Euler equations. In Subsection \ref{ss:c2Serfati}, we discuss some of these and their relation with our own result.

\subsection{Main Result}

We now present and discuss the main theorem of this chapter. In order to introduce the formal ideas behind our main result, Theorem \ref{it:ThEuler} below, consider a smooth bounded solution $u$ of the Euler system which has bounded derivatives. As we have explained above, the pressure field is a solution of a Poisson equation
\begin{equation*}
- \Delta \pi = \partial_j \partial_k (u_j u_k)
\end{equation*}
and the issue lies in the fact that the solutions of this elliptic problem are not unique, as they are given, in $\mc S'$, up to the addition of a harmonic polynomial. Therefore, in order to recover the pressure force $- \nabla \pi$, we must add to \eqref{ieq:c2PressureForce} the gradient of a harmonic polynomial $Q \in \R [X]$, so as to obtain
\begin{equation}\label{ieq:c2EulerWithPolynomial}
\partial_t u + \P \D (u \otimes u) + \nabla Q = 0.
\end{equation}
Therefore, $u$ will be a solution of the projected system \eqref{ieq:c2PEuler2} if and only if $\nabla Q \equiv 0$.

\medskip

A crucial element of our argument is that polynomial functions are spectrally supported at $\xi = 0$. This means that to determine whether $\nabla Q$ is nonzero or not, it is enough to study the \textsl{low frequency} behavior of \eqref{ieq:c2EulerWithPolynomial}. Consider a nonnegative and compacty supported cut-off function $\chi$ with $\chi(\xi) = 1$ around $\xi = 0$. Then, by applying the Fourier multiplication operator $\chi (\lambda D)$ to \eqref{ieq:c2EulerWithPolynomial} for $\lambda > 0$, we get\footnote{We adopt the notation $D = - i \nabla$. More generally, for any function $\phi : \R^d \tend \R^d$, we note $\phi(D)$ the Fourier multiplication operator defined by the Fourier transform $\mc F [\phi (D) f](\xi) := \phi(\xi) \what{f}(\xi)$ for all $f \in \mc S$.}
\begin{equation*}
\partial_t \chi(\lambda D) u + \chi(\lambda D) \P \D (u \otimes u) + \nabla Q = 0,
\end{equation*}
and by letting $\lambda \rightarrow + \infty$, we hope to recover information on $\nabla Q$ based on the properties of $u$. Focus for the moment on the convective term $\chi(\lambda D) \P \D (u \otimes u)$. The symbol of the operator $\P \D$ is a homogeneous function of order $1$, and so is $O(|\xi|)$ at low frequencies. It is therefore extremely tempting to resort to the first Bernstein inequality (Lemma \ref{l:c2bern}) and write that the low frequency limit fulfills
\begin{equation}\label{ieq:c2PDiv}
\big\| \chi(\lambda D) \P \D (u \otimes u) \big\|_{L^\infty} = O \left( \frac{1}{\lambda} \right) \qquad \text{as } \lambda \rightarrow + \infty.
\end{equation}
Unfortunately, the Bernstein inequalities do not apply to the symbol of $\P \D$ because it lacks regularity: its derivatives are not continuous (see the assumptions of Lemma \ref{l:c2bern} below). Nevertheless, we will see that it is still possible to show that \eqref{ieq:c2PDiv} is true, but by using more involved computations. The convergence \eqref{ieq:c2PDiv}, which implies that $\partial_t \chi(\lambda D) u \tend \nabla Q$, shows that $\nabla Q \equiv 0$ if and only if
\begin{equation}\label{ieq:SpHconvDu}
\chi (\lambda D) \partial_t u \tend 0 \qquad \text{as } \lambda \rightarrow + \infty
\end{equation}
at all times. This motivates the introduction of the space $\mc S'_h$ of distributions whose Fourier transforms are weakly vanishing at $\xi = 0$. This space was defined by Chemin in the mid-90s in the context of the study of Besov spaces.

\begin{defi}
Let $\chi \in \mc D$ be the cut-off function defined above. The space $\mc S'_h$ is the set of all tempered distributions $f \in \mc S'$ such that  we have
\begin{equation*}
\chi(\lambda \xi) \what{f}(\xi) \tend_{\lambda \rightarrow + \infty} 0 \qquad \text{in } \mc S'.
\end{equation*}
In particular, there is no nontrivial polynomial function in $\mc S'_h$, that is $\mc S'_h \cap \R[X] = \{ 0 \}$.
\end{defi}

The convergence \eqref{ieq:SpHconvDu} can therefore be noted $\partial_t u \in \mc S'_h$. We will prove the following equivalence theorem, whose statement is a natural outcome of the discussion above.

\begin{thm}\label{it:ThEuler}
Let $T > 0$, $u_0 \in L^\infty$ and $u \in C^0([0, T[; L^\infty)$ be a weak solution of the Euler equations \eqref{ieq:c2Euler} associated to the initial datum $u_0$ and to a pressure $\pi \in \mc D'([0, T[ \times \R^d)$. Then the following assertions are equivalent:
\begin{enumerate}[(i)]
\item the flow $u$ solves the projected problem with initial datum $u(0)$ that satisfies $u(0) - u_0 = {\rm Cst} \in \R$,
\item for all times $t \in [0, T[$, we have $u(t) - u(0) \in \mc S'_h$,
\item for all times $t \in [0, T[$, we have $u(t) - u(0) \in {\rm BMO}^{-1}$,
\item the pressure satisfies $\pi \in C^0(]0, T[ ; {\rm BMO})$,
\item the pressure force is continuous with respect to time $\nabla \pi \in C^0(]0, T[; \mc S')$ and $\nabla \pi(t) \in \mc S'_h$ for all $0 < t < T$.
\item the pressure defines a continuous function in the topology of locally integrable functions modulo constants $\pi \in C^0(]0, T[ ; L^1_{\rm loc}/ \, \R)$, and we have $\chi(D) \pi(t,x) = O \big( \log |x| \big)$ as $|x| \rightarrow + \infty$ for all $0 < t < T$.
\end{enumerate}
In the above, the space ${\rm BMO}^{-1}$ is the linear space of derivatives of ${\rm BMO}$ functions (see the definition in Section \ref{s:preliminaires} below).
\end{thm}

\begin{rmk}
The distinction made in assertion \textit{(i)} between the initial datum $u_0$ and the initial value $u(0)$ may seem unusual. The initial value problem for bounded solutions has a few difficulties: initial data are only defined up to an additive constant. However, these technicalities are not at the core of the proof.
\end{rmk}

\begin{rmk}
In assertion \textit{(ii)}, the condition $u(t) - u(0) \in \mc S'_h$ is simply an integrated version of the one $\partial_t u \in \mc S'_h$ we isolated above.
\end{rmk}

\begin{rmk}
A small clarification must be made concerning points \textit{(iv)} and \textit{(vi)} in Theorem \ref{it:ThEuler}. Because ${\rm BMO} \subset L^1_{\rm loc} / \, \R$ is a space of functions defined modulo constants (see Section \ref{s:preliminaires}), the condition $\pi \in C^0(]0, T[ ; {\rm BMO})$ does not mean that $\pi(t, x)$ is continuous with respect to time as a distribution, even in the $\mc D'$ topology. Besides, note that the condition $\chi(D) \pi(t,x) = O(\log |x|)$ in \textit{(vi)} is independent of the choice a particular element of the class of $\pi(t)$ modulo constants.
\end{rmk}

Let us compare Theorem \ref{it:ThEuler} with the other known results we mentioned. Firstly, unlike the ones we presented in Subsection \ref{ss:c2PreviousResults}, Theorem \ref{it:ThEuler} is \textsl{optimal} in the sense that we provide a \textsl{necessary and sufficient} condition for a bounded solution of the original Euler equations \eqref{ieq:c2Euler} to solve the projected equations \eqref{ieq:c2PEuler2}.

\medskip

Next, amongst the various results discussed in Subsection \ref{ss:c2PreviousResults}, we see that the condition $\pi(t) \in {\rm BMO}$ of J. Kato \cite{KatoJ} is optimal.\footnote{Here and below, we must point out remember that the results of \cite{KatoJ} and \cite{KV} are originally stated for the Navier-Stokes equations in different time regularities.} On the one hand, if $u \in L^\infty$ is a solution of the projected problem \eqref{ieq:c2PEuler2}, then $\pi$ is the image of a bounded function by a singular integral operator $\pi = (- \Delta)^{-1} \partial_j \partial_k (u_j u_k)$, and hence lies in ${\rm BMO}$. On the other hand, if $\pi(t) \in {\rm BMO}$, then we will show the pressure force satisfies $\nabla \pi \in \mc S'_h$, so $\nabla Q$ must be zero (see Proposition \ref{p:c1KockTataruSpH}).

\medskip

Concerning the condition $\pi(t, x) = o(|x|)$ of Kukavica and Vicol \cite{KV}, while it seems more general than statement \textit{(vi)} of Theorem \ref{it:ThEuler}, the following lemma shows that it is in fact stronger than assertion \textit{(v)}, and therefore implies \textit{(vi)}.

\begin{lemma}\label{l:c2KukavicaVicolCondition}
Let $f \in L^\infty_{\rm loc}$ be a function such that $f(x) = o(|x|)$ as $|x| \rightarrow + \infty$. Then $\nabla f \in \mc S'_h$.
\end{lemma}

\begin{proof}
First, write $f(x) = |x|\epsilon(x)$ with $\epsilon(x) \tend 0$ as $|x| \rightarrow + \infty$. In order to take advantage of this, we use the convolution product form of $\chi(\lambda D) \nabla f$. Fix $\psi_\lambda (x) = \lambda^{-d} \psi(\lambda^{-1}x)$ such that $\what{\psi_\lambda}(\xi) = \chi(\lambda \xi)$. We have
\begin{equation*}
\chi(\lambda D) \nabla f(x) = \nabla \psi_\lambda * f (x) = \frac{1}{\lambda} \int |y| \epsilon(y) \nabla \psi \left( \frac{x-y}{\lambda} \right) \frac{{\rm d}y}{\lambda^d}.
\end{equation*}
By changing variables in this integral, we obtain the upper bound
\begin{equation*}
| \nabla \psi_\lambda * f(t, x) | \leq \int |y| \epsilon(\lambda y) \left| \nabla \psi \left( \frac{x}{\lambda} - y \right) \right| {\rm d}y,
\end{equation*}
which tends to zero uniformly locally as $\lambda \rightarrow + \infty$ by dominated convergence. Therefore, we also have
\begin{equation*}
\chi (\lambda D) \nabla f \tend_{\lambda \rightarrow + \infty} 0 \qquad \text{in } \mc S'.
\end{equation*}
Concerning the second point of the Lemma, we must make sure that any $f \in {\rm BMO}$ satisfies $f(x) = o(|x|)$ as $|x| \rightarrow + \infty$.
\end{proof}

Finally, we comment on the last conditions \eqref{ieq:c2FLcondition} and \eqref{ieq:c2LMcondition}, which bear on the velocity field. It must be noted that the framework of \cite{FL} and \cite{LM} is, in a sense, more general because it deals with Navier-Stokes solutions that have locally finite energy, whereas only more regular Besov-Lipschitz solutions make sense for our ideal fluid equations. On the other hand, both \eqref{ieq:c2LMcondition} and \eqref{ieq:c2FLcondition} are particular cases of $\mc S'_h$ functions.

\begin{lemma}\label{l:c2LMconditions}
Consider $f \in L^\infty$. Assume that either one of the following conditions hold:
\begin{equation*}
\int \frac{|f(x)|^2}{(1 + |x|)^d} \dx < + \infty \qquad \text{or} \qquad \frac{1}{\lambda^d} \int_{|x-y| \leq \lambda} |f(y)|^2 {\rm d}y \tend_{\lambda \rightarrow + \infty} 0 \quad \text{in } L^\infty_{\rm loc}.
\end{equation*}
Then $f \in \mc S'_h$. In fact, the first condition implies the second one.
\end{lemma}

\begin{proof}
We start by showing that the first moment-type condition implies the second Morrey-type one. For all $\lambda > 0$, we have
\begin{equation*}
\frac{1}{\lambda^d} \int_{|x-y| \leq \lambda} |f(y)|^2 {\rm d}y = \int_{|x - y| \leq \lambda} \left( \frac{1+|y|}{\lambda} \right)^d \frac{|f(y)|^2}{(1+|y|)^d} \, {\rm d}y.
\end{equation*}
Set $g(y) = (1 + |y|)^{-d} |f(y)|^2$ and consider $r > 0$ and $\epsilon > 0$. Because $g \in L^1$ by assumption, we may fix a $R > 0$ such that
\begin{equation*}
\forall |x| \leq r, \qquad \int_{|x - y| \geq R} g(y)\, {\rm d}y \leq \epsilon.
\end{equation*}
Therefore, by separating in our integral the $y \in \R^d$ that are close to $x$ from those that are far away, we have, for all $|x| \leq r$ and $\lambda \geq R$,
\begin{equation*}
\begin{split}
\frac{1}{\lambda^d} \int_{|x-y| \leq \lambda} |f(y)|^2 {\rm d}y & = \int_{|x-y| \leq R} \left( \frac{1 + |y|}{\lambda} \right)^d g(y) \, {\rm d}y \; + \int_{R \leq |x-y| \leq \lambda} \left( \frac{1 + |y|}{\lambda} \right)^d g(y) \, {\rm d}y\\
& \leq \int_{|x-y| \leq R} \left( \frac{1 + |y|}{\lambda} \right)^d g(y) \, {\rm d}y \; + \epsilon \left( \frac{1 + r + \lambda}{\lambda} \right)^d \\
& \leq \left( \frac{1 + |x| + R}{\lambda} \right)^d \| g \|_{L^1} + \epsilon \left( 1 + \frac{1 + r}{\lambda} \right)^d.
\end{split}
\end{equation*}
By taking the limit superior of the quantity in the last line as $\lambda \rightarrow + \infty$, we see that for all $|x| \leq r$ and $\epsilon > 0$,
\begin{equation*}
\limss_{\lambda \rightarrow + \infty} \frac{1}{\lambda^d} \int_{|x-y| \leq \lambda} |f(y)|^2 {\rm d}y \leq \epsilon.
\end{equation*}
In other words, the integral above converges uniformly to zero on every ball $|x| \leq r$, so we have the desired $L^\infty_{\rm loc}$ convergence. Finally, we only have to show that the second condition implies $f \in \mc S'_h$. Consider $\psi_\lambda \in \mc S$ as in the proof of Lemma \ref{l:c2KukavicaVicolCondition} above and let $\epsilon > 0$. We fix a $R > 0$ such that $\| \mathds{1}_{|y| \geq R} \psi(y) \|_{L^1} \leq \epsilon$. We have
\begin{equation*}
| \chi (\lambda D)f(x) | \leq \int_{|x - y| \leq \lambda R} |\psi_\lambda (x-y) f(y) | \, {\rm d}y \; + \int_{|x - y| \geq \lambda R} |\psi_\lambda (x-y) f(y) | \, {\rm d}y.
\end{equation*}
Because $\psi \in \mc S$ is a Schwartz function and $f \in L^\infty$, the second integral is bounded by 
\begin{equation*}
\begin{split}
\int_{|x - y| \geq \lambda R} |\psi_\lambda (x-y) f(y) | \, {\rm d}y & \leq \| f \|_{L^\infty} \int_{|x-y| \geq \lambda R} |\psi_\lambda (x - y)| \, {\rm d}y\\
& = \| f \|_{L^\infty} \| \mathds{1}_{|y| \geq R} \psi(y) \|_{L^1} \leq \epsilon \| f \|_{L^\infty}.
\end{split}
\end{equation*}
On the other hand, we may apply the Cauchy-Schwarz inequality to the first integral, so as to obtain
\begin{equation*}
\begin{split}
\int_{|x - y| \leq \lambda R} |\psi_\lambda (x-y) f(y) | \, {\rm d}y & \leq \left( \int_{|x - y| \leq \lambda R} |f(y)|^2 \, {\rm d}y \right)^{1/2} \left( \int_{|x - y| \leq \lambda R} |\psi_\lambda (x-y)|^2 \, {\rm d}y \right)^{1/2} \\
& \leq \left( \frac{1}{\lambda^d} \int_{|x - y| \leq \lambda R} |f(y)|^2 \, {\rm d}y \right)^{1/2} \left( \int \left| \psi \left(\frac{x-y}{\lambda} \right) \right|^2 \, \frac{{\rm d}y}{\lambda^d} \right)^{1/2}\\
& = \| \psi \|_{L^2} \left( \frac{1}{\lambda^d} \int_{|x - y| \leq \lambda R} |f(y)|^2 \, {\rm d}y \right)^{1/2}.
\end{split}
\end{equation*}
Since, by assumption, this quantity converges uniformly locally to zero, it also does so in the $\mc S'$ topology.
\end{proof}

\begin{rmk}
In fact, the results of Lemma \ref{l:c2LMconditions} hold when $f$ is in the Morrey space of uniformly locally $L^2$ functions (see Definition 11.4 p. 108 in \cite{LM}) with pretty much the same proofs. These are the $f \in L^2_{\rm loc}$ such that the integrals
\begin{equation*}
\int_B |f(x)|^2 \dx \leq C
\end{equation*}
are bounded for all balls $B \subset \R^d$ by a constant that depends only on the volume of the balls $C = C(|B|)$. But we will not need such a level of generality.
\end{rmk}

\begin{rmk}
There are many functions in $\mc S'_h$ which do not fulfill the conditions of Lemma \ref{l:c2LMconditions}. For instance, any periodic function with average value zero is in $\mc S'_h$ without any of the properties of Lemma \ref{l:c2LMconditions} being true. Similarly, as shown in \cite{Cobb2}, the sign function $\sigma = \mathds{1}_{\R_+} - \mathds{1}_{\R_-}$ is $\mc S'_h$, while it is obvious that none of the properties of Lemma \ref{l:c2LMconditions} are fulfilled.
\end{rmk}

\subsection*{Acknowledgements}

I am extremely grateful to my advisor, Francesco Fanelli, for his continuous support to this work, and his patience with my getting somewhat sidetracked by this problem. Many thanks also go to my co-workers for their kindness and help. This work has been partially supported by the Deutsche Forschungsgemeinschaft (DFG, German Research Foundation) Project ID 211504053 - SFB 1060, and by the project CRISIS (ANR-20-CE40-0020-01), operated by the French National Research Agency (ANR).

\section{Preliminary Definitions}\label{s:preliminaires}

In this paragraph, we define some notation and function spaces, as well as provide some elements of harmonic analysis that are used throughout the paper. 

\subsection{The Homogeneous Littlewood-Paley Decomposition}

First of all, it will be convenient to define the \textsl{homogeneous Littlewood-Paley decomposition}. For this, we fix a smooth radial and compactly supported function $\chi$ that is supported in the ball $B(0, 2) \subset \R^d$, equal to $\chi(\xi) = 1$ for $|\xi| \leq 1$ and such that the function $r \mapsto \chi(re)$ is non-increasing over $\R_+$ for all $e \in \R^d$. Set $\varphi(\xi) = \chi(\xi) - \chi(2 \xi)$ and $\varphi_m(\xi) = \varphi(2^{-m}\xi)$ for all $m \in \Z$. Then the functions $(\varphi_m)_{m \in \Z}$ define a partition of unity over the frequency space
\begin{equation}\label{eq:partitionUnity}
\sum_{m \in \Z} \varphi_m (\xi) = 1 \qquad \text{for } \xi \neq 0.
\end{equation}
The sum above should be understood in the sense of pointwise convergence. This partition of unity allows us to define the operators $\Delta_m$ by Fourier multiplication
$$
\forall m \in \Z, \qquad \dot{\Delta}_m = \varphi_m(D),
$$
which are called the \textsl{Littlewood-Paley} operators. They are bounded in the $L^p \tend L^p$ for all $p \in [1, + \infty]$, and their norm depends solely on $\chi$. By virtue of the fact that the $\varphi_m$ form a partition of unity, it formally is possible to reconstruct any function $f$ by means of the $\dot{\Delta}_m$. This is the \textsl{homogeneous Littlewood-Paley decomposition}, or the formal identity
\begin{equation}\label{eq:LP}
\sum_{m \in \Z} \dot{\Delta}_m f = f.
\end{equation}
However, it must be noted that because the partition of unity \eqref{eq:partitionUnity} does not hold for $\xi = 0$, the homogeneous Littlewood-Paley decomposition cannot be true for all $f \in \mc S'$. In particular, it fails for polynomial functions (but not only! See \cite{Cobb2}), as their Fourier transform is supported at $\xi = 0$. The space $\mc S'_h$ which we have mentioned in the Introduction above is, broadly speaking, the space of distributions $f \in \mc S'$ for which the Littlewood-Paley decomposition \eqref{eq:LP} converges in $\mc S'$ topology.

\begin{defi}
The space $\mc S'_h$ is the set of all tempered distributions $f \in \mc S'$ such that we have
\begin{equation}\label{eq:definitionSph}
\chi(\lambda D) f \tend_{\lambda \rightarrow + \infty} 0 \qquad \text{in } \mc S'.
\end{equation}
\end{defi}

\begin{rmk}
There seems to be no consensus on the way the space $\mc S'_h$ should be defined: for example, the convergence \eqref{eq:definitionSph} is sometimes required to hold in the norm topology of $L^\infty$, as in \cite{BCD}. However this has no real bearing on this article: our results would remain true regardless. Let us mention the discussion in \cite{Cobb2} concerning $\mc S'_h$ and the different definitions that have been proposed.
\end{rmk}

The following topological property of $\mc S'_h$ will be used several times in the sequel.

\begin{prop}[see Proposition 19 in \cite{Cobb2}]\label{p:c2LinftyHClosed}
The space $L^\infty \cap \mc S'_h$ is closed in $L^\infty$ for the norm topology of $L^\infty$.
\end{prop}

\begin{proof}
Let $(f_n)_{n \geq 0}$ be a converging sequence of functions in $L^\infty \cap \mc S'_h$ whose limit is $f \in L^\infty$. We have, for all $\phi \in \mc S$,
\begin{align*}
\left| \left\langle \chi(\lambda D)f, \phi \right\rangle_{L^\infty \times L^1} \right| & \leq \left| \left\langle \chi (\lambda D)(f - f_n) , \phi \right\rangle_{L^\infty \times L^1} \right| +  \left| \left\langle \chi(\lambda D) f_n, \phi \right\rangle_{L^\infty \times L^1} \right| \\
& \leq \left\| \chi (\lambda D) (f - f_n) \right\|_{L^\infty} \| \phi \|_{L^1} + \left| \left\langle \chi(\lambda D) f_n, \phi \right\rangle_{\mc S' \times \mc S} \right|.
\end{align*}
The fact that the $\chi(\lambda D)f_n$ converge to $0$ in $\mc S'$ as $\lambda \rightarrow +\infty$ shows that we have, for all $n \geq 0$,
\begin{equation*}
\limss_{\lambda \rightarrow + \infty} \left| \left\langle \chi(\lambda D)f, \phi \right\rangle_{\mc S' \times \mc S} \right| \leq   C \left\| (f - f_n) \right\|_{L^\infty} \| \phi \|_{L^1}.
\end{equation*}
This term has limit $0$ as $n \rightarrow +\infty$ so that $f$ indeed lies in $L^\infty \cap \mc S'$.
\end{proof}

On of the main properties of the Littlewood-Paley decomposition is the way the functions $\dot{\Delta}_m f$ transform under the action of derivatives: because $\dot{\Delta}_m f$ is spectrally localized in an annulus of radius roughly $2^m$, applying $\nabla$ essentially amounts to a multiplication by $2^m$. This idea is the essence of the next Lemma, the Bernstein inequalities.

\begin{lemma}[Bernstein inequalities]\label{l:c2bern}
Let  $0<r<R$.   A constant $C$ exists so that, for any nonnegative integer $k$, any couple $(p,q)$ 
in $[1,+\infty]^2$, with  $p\leq q$,  and any function $u\in L^p$,  we  have, for all $\lambda>0$,
$$
\displaylines{
{\rm supp}\, ( \widehat u ) \subset   B(0,\lambda R)\quad
\Longrightarrow\quad
\|\nabla^k u\|_{L^q}\, \leq\,
 C^{k+1}\,\lambda^{k+d\left(\frac{1}{p}-\frac{1}{q}\right)}\,\|u\|_{L^p}\;;\cr
{\rm supp}\, ( \widehat u ) \subset \{\xi\in\R^d\,|\, r\lambda\leq|\xi|\leq R\lambda\}
\quad\Longrightarrow\quad C^{-k-1}\,\lambda^k\|u\|_{L^p}\,
\leq\,
\|\nabla^k u\|_{L^p}\,
\leq\,
C^{k+1} \, \lambda^k\|u\|_{L^p}\,.
}$$
\end{lemma}   

More generally, Fourier multipliers whose symbol are homogeneous functions of degree $s$ act on the functions $\dot{\Delta}_m f$ roughly as a multiplication by $2^{ms}$.

\begin{lemma}[see Lemma 2.2 in \cite{BCD}]\label{l:c2FourierMultiplier}
Let $\sigma(\xi)$ be a $C^k$ function away from $\xi = 0$ with $k = 2 \lfloor 1 + d/2 \rfloor$ and assume there is a degree $s \in \R$ such that $|\nabla^l \sigma (\xi)| \leq C |\xi|^{s - l}$ for all $\xi \neq 0$ and $0 \leq l \leq k$. Then there exists a constant depending only on $\sigma$ and on the the dyadic decomposition function $\chi$ such that, for all $p \in [1, +\infty]$,
\begin{equation*}
\forall m \in \mathbb{Z}, \forall f \in L^p, \qquad \| \dot{\Delta}_m \sigma(D) f \|_{L^p} \leq C 2^{ms} \| \dot{\Delta}_m f \|_{L^p}.
\end{equation*}
\end{lemma}

\subsection{Besov Spaces}

In this paragraph, we define the class of Besov spaces (both homogeneous and non-homogeneous). These are based on the Littlewood-paley decomposition.

\begin{defi}
We define the \emph{homogeneous Besov space} $\dot{B}^s_{p, r}$ as the set of those classes of distributions modulo polynomials $f \in \mc S' / \, \R[X]$ such that\footnote{In the sequel, we will note $[f] = \{ f + Q, \; Q \in \R[X] \}$ the class of $f \in \mc S'$ modulo polynomials. However, for the sake of conciseness, we will drop the brackets and note $f \in \mc S' / \, \R[X]$ when there is no ambiguity.}
\begin{equation*}
\| f \|_{\dot{B}^s_{p, r}} := \left\| \left( 2^{ms} \| \dot{\Delta}_m f \|_{L^p} \right)_{m \in \mathbb{Z}} \right\|_{\ell^r(\mathbb{Z})} < +\infty.
\end{equation*}
The space $\dot{B}^s_{p, r}$ is Banach for all values of $(s, p, r)$. Let us point out that $\dot{B}^s_{p, r}$ does not depend on the Littlewood-Paley decomposition function $\chi$ up to an isomorphism of Banach spaces (see Theorem 3.3 in \cite{YS}).
\end{defi}

Of course, it is highly unconvenient to work with classes of distributions modulo polynomial when dealing with non-linear PDEs, as is the case here. We therefore define realizations of some homogeneous Besov spaces as subspaces of $\mc S'$ (see \cite{Bourdaud} and \cite{BCD} for more on that topic).

\begin{defi}\label{d:c2BesovRealizSpH}
Consider $(s, p, r) \in \R \times [1, + \infty]^2$ such that we have
\begin{equation}\label{eq:subcrit}
s < \frac{d}{p} \qquad \text{or} \qquad s = \frac{d}{p} \text{ and } r=1.
\end{equation}
Then we define the space $\dot{\mathfrak{B}}^s_{p, r}$ as the set of all $f \in \mc S'_h$ such that $\| f \|_{\dot{B}^s_{p, r}} < + \infty$. The space $\mathfrak{B}^s_{p, r}$ is Banach for all values of $(s, p, r)$ such that \eqref{eq:subcrit} holds. Moreover, the linear map
\begin{equation*}
f \in \dot{B}^s_{p, r} \longmapsto \sum_{m \in \Z} \dot{\Delta}_m f \in \mathfrak{B}^s_{p, r}
\end{equation*}
defines an isometric isomorphism $\dot{\mathfrak{B}}^s_{p, r} \approx \dot{B}^s_{p, r}$. In particular, every $f \in \dot{B}^s_{p, r}$ is the image of an element of $\dot{\mathfrak{B}}^s_{p, r}$ by the natural projection map $\mc S' \tend \mc S' / \, \R[X]$.
\end{defi}

The main interest of homogeneous Besov spaces is that Fourier multipliers whose symbol are homogeneous functions act very naturally on them. For example, if $\P$ is the Leray projector defined in the Introduction above, then Lemma \ref{l:c2FourierMultiplier} shows that it is a bounded operator
$$
\P : \dot{B}^s_{p, r} \tend \dot{B}^s_{p, r}
$$
for all values of $(s, p, r) \in \R \times [1, + \infty]^2$. This fact is particularly useful for dealing with enpoint exponents $p \in \{ 1, + \infty \}$, where usual Calder\'on-Zygmund theory of Singular Integral Operators fails.

\medskip

We now focus on \textsl{non-homogeneous} Besov spaces. They are defined so as to avoid the question of (non)convergence of the Littlewood-Paley decomposition: define the operator $\Delta_{-1} = \chi(D)$, so that we have the identity
$$
\Delta_{-1} f + \sum_{m \geq 0} \dot{\Delta}_m f = f
$$
for \textsl{every} $f \in \mc S'$. This is the \textsl{non-homogeneous Littlewood-Paley decomposition}, which is always convergent in $\mc S'$. To simplify notation, we note $\Delta_m := \dot{\Delta}_m$ for $m \geq 0$.

\begin{defi}
Let $s\in\R$ and $1\leq p,r\leq+\infty$. The \textsl{non-homogeneous Besov space} $B^{s}_{p,r}\,=\,B^s_{p,r}(\R^d)$ is defined as the set of tempered distributions $u \in \mc S'$ for which
$$
\|f\|_{B^{s}_{p,r}}\,:=\,
\left\|\left(2^{ms}\,\|\Delta_m f \|_{L^p}\right)_{m \geq -1}\right\|_{\ell^r}\,<\,+\infty\,.
$$
The non-homogeneous space $B^s_{p, r}$ is Banach for all values of $(s, p, r) \in \R \times [1, + \infty]^2$.
\end{defi}

\subsection{Functions of Bounded Mean Oscillations}

Finally, we will need a few properties of the space ${\rm BMO}$ of functions of Bounded Mean Oscillations. 

\begin{defi}
The space ${\rm BMO}$ is the set of classes of locally integrable functions modulo constant functions $f \in L^1_{\rm loc} / \, \R$ such that\footnote{We will adopt the same notation as above for the class $[f] = \{ f + C, \; C \in \R \}$ of $f \in L^1_{\rm loc}$, or simply $f \in L^1_{\rm loc} / \, \R$ when there is no ambiguity.}
\begin{equation*}
\| f \|_{\rm BMO} := \sup_{B} \frac{1}{|B|} \int_B \big| f(x) - (f)_B \big| \dx < + \infty,
\end{equation*}
where the supremum ranges over all balls $B \subset \R^d$ and $(f)_B = |B|^{-1} \int_B f$ is the average value of $f$ on $B$. The space ${\rm BMO}$ is Banach.
\end{defi}

One of the main features of ${\rm BMO}$ is that Singular Integral Operators act continuously on it. In particular, the Leray projector is a bounded operator
$$
\P : {\rm BMO} \tend {\rm BMO},
$$
whose precise definition involves the Fefferman-Stein duality $(\mc H^1)' = {\rm BMO}$ with the Hardy space $\mc H^1$. We refer to \cite{Stein}, in particular Theorem 1 pp. 142--144, for more on this topic. 

\medskip

A useful result is the way ${\rm BMO}$ interacts with Besov spaces. The following embedding Lemma consists of the dual embeddings for the Hardy space $\dot{B}^0_{1, 1} \subset \mc H^1 \subset \dot{B}^0_{1, 2}$, which can be found in paragraph 2.2.2 p. 105 of \cite{Grafakos}.

\begin{lemma}\label{l:BMOembed}
The following embeddings hold: $\dot{B}^0_{\infty, 2} \subset {\rm BMO} \subset \dot{B}^0_{\infty, \infty}$.
\end{lemma}

\begin{rmk}\label{r:embeddBMO}
The embeddings of Lemma \ref{l:BMOembed} require an explanation. On the one hand, the inclusion map ${\rm BMO} \subset \dot{B}^0_{\infty, \infty}$ can only make sense if any $[f] \in {\rm BMO}$ has a representative $f \in \mc S'$. This follows for example from the following integral inequality:
\begin{equation}\label{eq:BMOinequ}
\int \frac{\big| f(x) - (f)_{B_1} \big|}{(1 + |x|)^{d+1}} \dx \leq C(d) \| f \|_{\rm BMO}
\end{equation}
where $B_1 \subset \R^d$ is the unit ball. A proof can be found in \cite{Stein}, see equation (2) in paragraph 1.1.4, pp. 141.\footnote{See also Remark 1.27 in \cite{CobbPhD} for a logarithmic version of the inequality.} Note in particular that ${\rm BMO}$ cannot contain any non-constant polynomial function.

For the inclusion $\dot{B}^0_{\infty, 2} \subset {\rm BMO}$, its meaning is that any $[f] \in \dot{B}^0_{\infty, 2}$ has a representative $f \in \mc S'$ such that $\| f \|_{\rm BMO} < + \infty$.
\end{rmk}

\medskip

We now define the space ${\rm BMO}^{-1}$ of derivatives of ${\rm BMO}$ functions, after the work \cite{KT} of Koch and Tataru, who introduced it as a set of possible initial daat for the Navier-Stokes equations.

\begin{defi}
Let ${\rm BMO}^{-1}$ be the set of $f \in \mc S'$ such that there are functions $g_1, ..., g_d \in {\rm BMO}$ with $f = \sum_k \partial_k g_k$.
\end{defi}

From Lemma \ref{l:BMOembed}, we gather that there is an embedding ${\rm BMO}^{-1} \subset \dot{B}^{-1}_{\infty, \infty}$. One of the advantages of ${\rm BMO}^{-1}$ is that its elements are distributions, and not classes of distributions modulo constants as in ${\rm BMO}$, as the expression $\sum_k \partial_k g_k$ above does not depend on the representatives of the $g_k$ modulo constants. In fact, we can go one step further.

\begin{prop}\label{p:c1KockTataruSpH}
We have the inclusion ${\rm BMO}^{-1} \subset \dot{\mathfrak{B}}^{-1}_{\infty, \infty}$. In particular, any function $f \in {\rm BMO}^{-1}$ is the sum of its homogeneous Littlewood-Paley decomposition, or in other words $f \in \mc S'_h$.
\end{prop}

\begin{proof}
We start by recalling that ${\rm BMO}^{-1}$ contains no nontrivial polynomial function. This follows from the fact that no polynomial of degree $1$ or higher can be in ${\rm BMO}$ (see inequality \eqref{eq:BMOinequ} in Remark \ref{r:embeddBMO}).

\medskip

Now, let $f \in {\rm BMO}^{-1}$. Because of the embedding ${\rm BMO}^{-1}\subset \dot{B}^{-1}_{\infty, \infty}$, and since $\dot{B}^{-1}_{\infty, \infty}$ can be realized as $\dot{\mathfrak{B}}^{-1}_{\infty, \infty}$, we see that $f$ can be written as a sum
$$
f = h + Q \in \dot{\mathfrak{B}}^{-1}_{\infty, \infty} \oplus \mc S'_h.
$$
As $Q = f-h \in {\rm BMO}^{-1} + \, \mc S'_h$ belongs in a space which contains no nontrivial polynomial function, we deduce that $Q = 0$, and so $f = h$. Therefore $f \in \dot{B}^{-1}_{\infty, \infty}$.
\end{proof}

\medskip

We also will need to evaluate the rough growth of ${\rm BMO}$. The John-Nirenberg inequality shows that it is essentially logarithmic, which is what the following Proposition endeavours to prove.

\begin{prop}
Let $f \in {\rm BMO}$. Then we have \label{p:logBMO}
$$
\Delta_{-1} f(x) - \Delta_{-1}f(0) = O \big( \log(1 + |x|) \big), \qquad \text{as } |x| \rightarrow + \infty.
$$
\end{prop}

\begin{proof}
A key element of the proof will be the use of the embedding ${\rm BMO} \subset \dot{B}^0_{\infty, \infty}$. For notational convenience, set $g = \Delta_{-1} f$. We wish to make use of the (homogeneous) Littlewood-Paley decomposition of $g$, but we must be careful as it may not converge. We therefore define the partial sum operator
$$
T_N = \sum_{-N}^0 \dot{\Delta}_m
$$
for $N \geq 0$. We then have
$$
|g(x) - g(0)| \leq | T_Ng(x) - T_Ng(0) | + \big| ({\rm Id} - T_N) g(x) - ({\rm Id} - T_N) g(0) \big|.
$$
In order to bound the first term of the righthand side, we note that
\begin{equation}\label{eq:BMOEQ1}
\begin{split}
| T_Ng(x) - T_Ng(0) | & \leq \sum_{-N}^0 \left| \dot{\Delta}_m g(x) - \dot{\Delta}_m g(0) \right| \\
& \leq 2 (N+1) \| g \|_{\dot{B}^0_{\infty, \infty}}.
\end{split}
\end{equation}
For the second term, we bound the difference by the mean value theorem:
$$
\big| ({\rm Id} - T_N) g(x) - ({\rm Id} - T_N) g(0) \big| \leq |x| \, \big\| \nabla ({\rm Id} - T_N) g \big\|_{L^\infty}.
$$
Let us write $\nabla ({\rm Id} - T_N)g$ as the sum of its Littlewood-Paley decomposition, by Proposition \ref{p:c1KockTataruSpH}. We deduce from the previous equation that
\begin{equation*}
\begin{split}
\big| ({\rm Id} - T_N) g(x) - ({\rm Id} - T_N) g(0) \big| & \leq |x| \sum_{m < -N} \big\| \nabla \dot{\Delta}_m g \big\|_{L^\infty} \\
& \leq |x| \, \| g \|_{\dot{B}^0_{\infty, \infty}} \sum_{m < -N} 2^m \\
& \leq C |x| \, \| g \|_{\dot{B}^0_{\infty, \infty}} 2^{-N}.
\end{split}
\end{equation*}
By taking $N$ such that $N = \log(1 + |x|)$, we may put the previous inequality together with \eqref{eq:BMOEQ1} in order to obtain the desired result:
\begin{equation}\label{eq:BMOEQ2}
| g(x) - g(0) | \leq C \| g \|_{\dot{B}^0_{\infty, \infty}} \log \big( 1 + |x| \big).
\end{equation}
\end{proof}

\begin{rmk}
In fact, we have proved a slightly more general result: if $f \in \mc S'$ is a representative of $[f] \in \dot{B}^0_{\infty, \infty}$ such that $f(x) = o(|x|)$ as $|x| \rightarrow + \infty$, then it follows that inequality \eqref{eq:BMOEQ2} also holds.
\end{rmk}

\section{Bounded Weak Solutions of the Euler System}

In this paragraph, we clarify the precise meaning of a bounded weak solution of the Euler system \eqref{ieq:c2Euler} and the projected problem \eqref{ieq:c2PEuler2}. This will require exploring further the properties of the operator $\P \D$.

\subsection{Weak Solutions of the Euler Equations}

We define and comment the notion of bounded weak solutions of the Euler equations: these read
\begin{equation}\label{eq:c2Euler}
\begin{cases}
\partial_tu + \D(u \otimes u) + \nabla \pi = 0\\
\D(u) = 0.
\end{cases}
\end{equation}

\begin{defi}\label{d:c2weakEuler}
Let $u_0 \in L^\infty$ be a divergence-free function and $T > 0$. A function $u \in L^2_{\rm loc}([0, T[; L^\infty)$ is said to be a \textsl{bounded weak solution} of the Euler problem \eqref{eq:c2Euler} associated to the initial datum $u_0$ if there is a pressure $\pi \in \mc D'([0, T[ \times \R^d)$ such that
\begin{equation}\label{eq:c2EulerWeak}
\begin{cases}
\partial_t u + \D(u \otimes u) + \nabla \pi = \delta_0(t) \otimes u_0(x) \\
\D(u) = 0
\end{cases}
\qquad \text{in } \mc D'([0, T[ \times \R^d).
\end{equation}
In the equation above, the tensor product $\delta_0(t) \otimes u_0(x)$ is defined by the relation
\begin{equation*}
\forall \phi \in \mc D ([0, T[ \times \R^d), \qquad \big\langle \delta_0 (t) \otimes u_0(x), \phi(t, x) \big\rangle := \int u_0(x) \cdot \phi(0, x) \dx.
\end{equation*}
\end{defi}

\begin{rmk}
Several remarks are in order concerning this definition. Firstly, by testing \eqref{eq:c2EulerWeak} against any divergence-free $\phi \in \mc D([0, T[ \times \R^d)$, we obtain the usual integral formulation 
\begin{equation}\label{eq:c2weakMomentum}
\int_0^T \int \Big\{ \partial_t \phi \cdot u + \nabla \phi : u \otimes u \Big\} \dx \dt + \int u_0 \cdot \phi(0) \dx = 0
\end{equation}
of the momentum equation. Strictly speaking, Definition \ref{d:c2weakEuler} is a bit stronger than the previous integral formulation because an extra assumption is made on the pressure: it has to define a distribution with respect to both \textsl{time and space}.
\end{rmk}

\begin{rmk}
Next, we note that, contrary to most evolution equations, there is no need for a weak solution $u$ to be continuous with respect to time, even in the $\mc D'$ topology: the presence of the pressure may compensate time singularities in the flow, as would be the case if, for example, $u(t, x) = \mathds{1}_{[1, + \infty[}(t) V$ where $V \in \R^d$ is a constant vector. As we will see, this problem arises only in the $L^\infty$ framework: $L^p$ regularity of the solutions (with $p < +\infty$) implies continuity with respect to time in some weak topology, since, in that case, we may indeed apply the Leray projection to obtain \eqref{eq:c2EulerP} below.
\end{rmk}

\begin{rmk}
Finally, we must comment on the notion of initial datum in Definition \ref{d:c2weakEuler}. Because the weak form of the Euler equations is taken in the sense of $\mc D' ([0, T[ \times \R^d)$, that is with $t = 0$ included in the time interval, any translation $u_0 + V$ (with $V \in \R^d$) of the initial datum is also a valid initial datum \textsl{for the same solution}:
\begin{equation*}
\partial_t u + \D(u \otimes u) + \nabla \big( \pi + \delta_0(t) \otimes V \big) = \delta_0(t) \otimes \big( u_0(x) + V \big).
\end{equation*}
Simply, a different choice of pressure allows for $u$ to be associated with infinitely many initial data, regardless of the time regularity of $u$ (it could be $C^1_T(L^\infty)$). Seen from the perspective of the integral form \eqref{eq:c2weakMomentum} of the momentum equation, the ambiguity of the initial data is a direct consequence of testing with \textsl{divergence-free} functions only: for any fixed $V \in \R^d$ and divergence-free $\phi \in \mc D([0, T[ \times \R^d)$,
\begin{equation*}
\int (u_0 + V) \cdot \phi(0) \dx = \int u_0 \cdot \phi(0) \dx .
\end{equation*}
These test functions are orthogonal, in the sense of the $\mc S' \times \mc S$ duality, to all gradients. Therefore, any (regular) flow $u$ which solves the Euler problem is also a weak solution (according to Definition \ref{d:c2weakEuler}) related to the initial data $u(0) + \nabla g$ (for any $g \in \mc S'$).

In the $L^p$ framework (with $p < +\infty$), this ambiguity totally disappears, as long as we require the initial data to be $L^p$. Indeed, any divergence-free $f \in L^p$ which is also the gradient of some tempered distribution $f = \nabla g$ must be a harmonic polynomial, since we would then have $\Delta g = 0$. Since there is no nonzero polynomial in $L^p$, this implies $f = 0$.

Our problem lies in the fact that the space $L^\infty$ intersects non-trivially with $\R [X]$, so that there are nonzero divergence-free functions which are also gradients: constant functions.
\end{rmk}

\medskip

As we said in the introduction, bounded solutions of the Euler equations \eqref{eq:c2Euler} are not unique, even under $C^\infty$ smoothness assumptions.

\subsection{Weak Solutions of the Projected Problem}

In this section, we focus on the projected problem: formally, one may apply the Leray projection operator to kill the pressure term and obtain an equation on $u$ only, namely
\begin{equation}\label{eq:c2EulerP}
\partial_t u + \P \D(u \otimes u) = 0.
\end{equation}
However, as we saw in Chapter 1, Fourier multiplication operators with bounded homogeneous symbols of degree zero, such as the Leray projection $\P$, are not properly defined on $L^\infty$. We must therefore clarify the meaning of equation \eqref{eq:c2EulerP} when the solutions are solely bounded, say $u \in L^2_T(L^\infty)$. 

\medskip

The general idea of this paragraph is to study the order one operator $\P \D$ instead of $\P$, which is a combination of operators the form $T_{j, k, l} := \partial_j \partial_k \partial_l (- \Delta)^{-1}$. By introducing the fundamental solution $E \in L^1_{\rm loc}$ of the Laplacian on $\R^d$ given by
\begin{equation*}
E(x) = \frac{C(d)}{|x|^{d-2}} \text{ if } d \geq 3 \qquad \text{ and } \qquad E(x) = C(2) \log |x| \text{ if } d = 2,
\end{equation*}
where the constant $C(d) > 0$ is such that $-\Delta E = \delta_0$, we may see $T_{j, k, l}$ as a convolution operator whose kernel is $\Gamma_{j, k, l} := \partial_j \partial_k \partial_l E$ (seen as a distribution). We prove the following Proposition, which is a decomposition of Lemarié-Rieusset \cite{LM} (see Lemma 11.1 p. 106) also used in the simpler setting of bounded solutions by Pak and Park \cite{PP} (see equations (20) and (21) p. 1161).

\begin{prop}\label{p:c2Gamma}
Consider $j, k, l \in \{1, ..., d\}$ and let $\Gamma_{j,k,l} = \partial_j \partial_k \partial_l E \in \mc D'$. Then $\Gamma_{j, k, l}$ is the sum of a compactly supported distribution and an $L^1$ function. In addition, for all $s \in \R$ and $p, r \in [1, + \infty]$, convolution by $\Gamma$ defines a bounded operator on the non-homogeneous Besov spaces
\begin{equation*}
T_{j, k, l} : B^s_{p, r} \tend B^{s-1}_{p, r}.
\end{equation*}
\end{prop}

\begin{rmk}
The Besov boundedness of $T_{j, k, l} : B^s_{p, r} \tend B^{s-1}_{p, r}$ is falsely obvious in the case $p = +\infty$. Although Lemma \ref{l:c2FourierMultiplier} makes it clear that the operator
\begin{equation*}
\tilde{T}_{j, k, l} := \sum_{m \in \Z} \dot{\Delta}_m \partial_j \partial_k \partial_l (- \Delta)^{-1}
\end{equation*}
is bounded in the $B^s_{\infty, r} \tend B^{s-1}_{\infty, r}$ topology, as the sum converges normally in $L^\infty$ when restricted to ranks $j \leq -1$, the two operators $\tilde{T}_{j, k, l}$ and $T_{j, k, l}$ might not coincide: they could differ by a polynomial. This is why we need two extra steps: firstly to define $T_{j, k, l}$, this is the purpose of Proposition \ref{p:c2Gamma}, and secondly to show that $\tilde{T}_{j, k, l}$ and $T_{j, k, l}$ are actually equal, this being the scope of Proposition \ref{p:OpIsHom} below.
\end{rmk}

\begin{proof}
By fixing a compactly supported function $\chi \in \mc D$ with $\chi(x) \equiv 1$ around $x=0$, we write $\Gamma_{j, k, l} = \chi \partial_j \partial_k \partial_l E + (1 - \chi) \partial_j \partial_k \partial_l E$. Because of the form of the fundamental solution $E$, its third derivatives are integrable at infinity:
\begin{equation}\label{eq:c2GammaEQ1}
\partial_j \partial_k \partial_l E (x) = O \left( \frac{1}{|x|^{d+1}} \right) \qquad \text{as } |x| \rightarrow + \infty.
\end{equation}
Therefore, the function $(1 - \chi) \partial_j \partial_k \partial_l E$ is $L^1$ and we have proved that $\Gamma_{j, k, l} \in \mc E' + L^1$, where $\mc E'$ is the space of compactly supported distributions. Concerning boundedness in Besov spaces, we note that the operator $\P \D$ is entirely defined for frequencies $\xi \neq 0$ by the Fourier transform 
\begin{equation*}
\forall \phi \in \mc D(\R^d \setminus \{ 0 \} ; \R^d \otimes \R^d), \qquad \mc F \big[ \P \D \mc F^{-1}[\phi] \big](\xi) = \sigma(\xi) \phi(\xi),
\end{equation*}
where $\sigma$ is the symbol of $\P \D$. Consequently, Lemma \ref{l:c2FourierMultiplier} makes it clear that for any $m \geq 0$ and $f \in B^s_{p, r}$ we have
\begin{equation*}
\| \Delta_m \P \D (f) \|_{L^p} \leq C 2^m \| \Delta_m f \|_{L^p}.
\end{equation*}
It only remains to estimate the low frequency block $\Delta_{-1} \P \D (f)$. Let $\psi \in \mc S$ such that the operator $\Delta_{-1}$ is the convolution by $\psi$, that is $\Delta_{-1}f = \psi * f$. We use the cut-off function $\theta$ as above and split the kernel into two parts
\begin{equation*}
\Delta_{-1} \P \D (f) = \psi * \Gamma_{j, k, l} * f = \psi * \partial_j \partial_k \partial_l (\theta E) +  \psi * \partial_j \partial_k \partial_l \big( (1 - \theta)E \big).
\end{equation*}
Integrating by parts in the convolution product yields
\begin{equation*}
\psi * \Gamma_{j, k, l} * f = \partial_j \partial_k \partial_l \psi * (\theta E) * f + \psi * \partial_j \partial_k \partial_l \big( (1 - \theta) E \big)*f.
\end{equation*}
Now, because $E$ is locally integrable and thanks to \eqref{eq:c2GammaEQ1}, both kernels
\begin{equation*}
\partial_j \partial_k \partial_l \psi * (\theta E) \in L^1 \qquad \text{and} \qquad \psi * \partial_j \partial_k \partial_l \big( (1 - \theta) E \big) \in L^1
\end{equation*}
are integrable functions. The Hausdorff convolution inequality guarantees that $\Delta_{-1} \P \D$ is a bounded $L^p \tend L^p$ operator for all $p \in [1, + \infty]$.
\end{proof}

Proposition \ref{p:c2Gamma} makes it possible to define the notion of bounded weak solution of the projected Euler system \eqref{eq:c2EulerP}.

\begin{defi}\label{d:c2EulerP}
Let $T > 0$ and $u_0 \in L^\infty$ be a bounded divergence free function. We say that $u \in L^2_{\rm loc}([0, T[ ; L^\infty)$ is a \textsl{bounded weak solution} of the projected Euler system \eqref{eq:c2EulerP} associated to the initial datum $u_0$ if
\begin{equation*}
\partial_t u + \P \D (u \otimes u) = \delta_0(t) \otimes u_0(x) \qquad \text{in } \mc D'([0, T[ \times \R^d).
\end{equation*}
\end{defi}

Bounded weak solutions of the projected system \eqref{eq:c2EulerP} behave in many ways better than weak solutions of the Euler system. For example, solutions are necessarily continuous with respect to time in the $B^{-1}_{\infty, \infty}$ topology. In addition, the pressure, which is given by 
\begin{equation}\label{eq:c2EulerPEQ1}
\pi = (- \Delta)^{-1} \partial_j \partial_k (u_j u_k) \qquad (\text{modulo constants})
\end{equation}
has minimal regularity properties: being the image of the bounded function $u_j u_k \in C^0_T(L^\infty)$ by the singular integral operator $(- \Delta)^{-1} \partial_j \partial_k$, it defines a ${\rm BMO}$ function
\begin{equation*}
\pi \in C^0([0, T[; {\rm BMO}).
\end{equation*}
However, it must be noted that because ${\rm BMO}$ is a space of functions defined up to a constant (and hence ${\rm BMO} \hookrightarrow \mc S' / \, \R[X]$), the pressure does not define a distribution on $[0, T[ \times \R^d$, as the constant summand in \eqref{eq:c2EulerPEQ1} may be extremely singular in the time variable. On the other hand, because the pressure force is the derivative of \eqref{eq:c2EulerPEQ1}, it unambiguously defines a ${\rm BMO}^{-1} \subset \mc S'$ function
\begin{equation*}
\nabla \pi \in C^0([0, T[; {\rm BMO}^{-1}) \subset \mc D'([0, T[ \times \R^d).
\end{equation*}

\section{Proof the Main Result}

We are now ready to prove our main result, Theorem \ref{it:ThEuler}. We will start by recalling the statement of Theorem \ref{it:ThEuler} and making a few additional comments. Then, the proof will start with the simpler case where the solutions are $C^1$ with respect to the time variable, before dealing with the full statement of Theorem \ref{it:ThEuler}.

\subsection{Statement of the Theorem and Comments}

For the readers convenience, we recall the statement of our main result.

\begin{thm}\label{t:c2Euler}
Let $T > 0$, $u_0 \in L^\infty$ and $u \in C^0([0, T[; L^\infty)$ be a weak solution of the Euler equations \eqref{eq:c2Euler} associated to the initial datum $u_0$ and to a pressure $\pi \in \mc D'([0, T[ \times \R^d)$. Then the following assertions are equivalent:
\begin{enumerate}[(i)]
\item the flow $u$ solves the projected problem with initial datum $u(0)$ that satisfies $u(0) - u_0 \in {\rm Cst}$,
\item for all times $t \in [0, T[$, we have $u(t) - u(0) \in \mc S'_h$,
\item for all times $t \in [0, T[$, we have $u(t) - u(0) \in {\rm BMO}^{-1}$,
\item the pressure satisfies $\pi \in C^0(]0, T[ ; {\rm BMO})$,
\item the pressure force is continuous with respect to time $\nabla \pi \in C^0(]0, T[; \mc S')$ and $\nabla \pi(t) \in \mc S'_h$ for all $0 < t < T$.
\item the pressure defines a continuous function in the topology of locally integrable functions modulo constants $\pi \in C^0(]0, T[ ; L^1_{\rm loc}/ \, \R)$, and we have $\chi(D) \pi(t,x) = O \big( \log |x| \big)$ as $|x| \rightarrow + \infty$ for all $0 < t < T$.
\end{enumerate}
\end{thm}

\begin{rmk}
In this Theorem, the velocity field is assumed to be continuous with respect to time in the $L^\infty$ topology. This may seem odd when compared to known solutions of the Euler equations. For example, the solutions of Pak and Park \cite{PP} are continuous in $B^1_{\infty, 1}$, and therefore, by Proposition \ref{p:c2Gamma},
\begin{equation}\label{eq:c2ExchReg}
\partial_t u = - \P \D (u \otimes u) \in C^0(B^0_{\infty, 1}),
\end{equation}
so that a $C^1(L^\infty)$ assumption might seem more natural, in addition to (as we will see) simplifying the proof. Unfortunately, exchanging space regularity for time regularity as we just did, requires having a solution of the projected problem. If $u \in C^0_T(B^1_{\infty, 1})$ is only a solution of the Euler equations \eqref{eq:c2Euler}, one cannot repeat operation \eqref{eq:c2ExchReg}, as the pressure force $-\nabla \pi$ may well not be regular with respect to time. Recall that the uniform flow \eqref{ieq:c2Unif} solves the Euler problem even if $f(t)$ is not $C^1$.

In that regard, working with $C^1_T(L^\infty)$ solutions would be somewhat artificial. In contrast, continuity in the time variable is a very reasonable assumption, in the light of the existence of paradoxical solutions (that dissipate kinetic energy) in the class $C^0(L^2)$, see \cite{DS}. In other words, there is no point in going below time-continuous regularity.
\end{rmk}

\begin{rmk}
We note that condition \textit{(ii)} of the theorem is in fact a Galilean condition: its validity is independent of the inertial reference frame in which the velocity is computed. This shows that the equivalence issue has something to do with a fundamental property of the solution, and is not a mere artifact of the Galilean nature of the equations.
\end{rmk}

\subsection{Equivalence of the Two Formulations: Smooth in Time Solutions}\label{ss:c2C1proof}

In this subsection, we prove Theorem \ref{t:c2Euler} under a comfortable assumption that the solutions are $C^1(L^\infty)$. This simplifies the proof very much, as it will allow the derivative $\partial_t u$ to be handled as a locally integrable function.

\begin{prop}\label{p:c2Euler1}
Consider $T > 0$. Let $u_0 \in L^\infty$ be a divergence-free initial datum and $u \in C^1([0, T[;L^\infty)$ be a weak solution of the Euler problem (according to Definition \ref{d:c2weakEuler}) for some pressure field $\pi \in \mc D'([0, T[ \times \R^d)$ and related to the initial datum $u_0$. Then all conditions of Theorem \ref{t:c2Euler} are equivalent.
\end{prop}

The proof of this proposition is split in several steps. We start by reformulating the problem in a way that lets the Leray projector appear in the equations. This is the purpose of the following lemma.

\begin{lemma}\label{l:proofL1}
Let $u$ be as in Proposition \ref{p:c2Euler1}. For all times $t \in ]0, T[$, there exists a polynomial $Q(t) \in \R [X]$ such that
\begin{equation}\label{eq:L1}
\partial_t u + \P \D (u \otimes u) + \nabla Q(t) = 0.
\end{equation}
Moreover, $Q(t)$ must be at most one linear: $\deg \big( Q(t) \big) \leq 1$.
\end{lemma}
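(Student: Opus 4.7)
The plan is to identify $\nabla Q(t)$ as the discrepancy between the genuine pressure of the weak formulation and the canonical pressure implicit in the Leray projector, and then to constrain $Q(t)$ via a Liouville-type argument. First I would test the weak formulation \eqref{eq:weakMomentum} with $\phi(t,x) = \psi(t)\phi_0(x)$, where $\psi \in \mc D((0,T))$ is a time cutoff and $\phi_0 \in \mc D(\R^d;\R^d)$ is divergence-free. The $C^1(L^\infty)$ regularity allows integration by parts in time, and varying $\psi$ gives the pointwise identity $\langle \partial_t u(t) + \D(u(t)\otimes u(t)), \phi_0\rangle = 0$ for every $t \in (0,T)$ and every divergence-free $\phi_0 \in \mc D$. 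Lemma \ref{l:integrability} then yields some $\pi(t) \in \mc S'$ with $\partial_t u(t) + \D(u(t)\otimes u(t)) + \nabla\pi(t) = 0$ in $\mc S'$.

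The second step is to pass from $\D(u\otimes u)$ to $\P\D(u\otimes u)$. The key observation is that the difference $\P\D(u\otimes u) - \D(u\otimes u)$ should be a gradient, since morally $\P - \Id = \nabla(-\Delta)^{-1}\D$. To make this rigorous on $L^\infty$ data, I would test against a divergence-free $\phi_0 \in \mc D$: formally $\P\phi_0 = \phi_0$, and the adjoint relation yields $\langle \P\D(u\otimes u), \phi_0\rangle = \langle \D(u\otimes u), \phi_0\rangle$. This adjoint computation is justified through the convolution kernel $\Gamma$ of Proposition \ref{p:Gamma} on the low-frequency part (standard duality of convolution of an $L^1$ kernel against an $L^\infty$ function with a $\mc D$ test function) and through Fourier-multiplier duality on the high-frequency part. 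A second application of Lemma \ref{l:integrability} then produces $R(t) \in \mc S'$ with $\P\D(u\otimes u) - \D(u\otimes u) = \nabla R(t)$, and $Q(t) := \pi(t) - R(t)$ satisfies \eqref{eq:L1}.

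It remains to show that $Q(t)$ is a polynomial of degree at most one. Taking the divergence of \eqref{eq:L1} yields $\Delta Q(t) = 0$ in $\mc S'$, because both $\partial_t u$ and $\P\D(u\otimes u)$ are divergence-free by construction. Harmonic tempered distributions have Fourier transforms supported at the origin, so $Q(t)$ is a polynomial. From \eqref{eq:L1} we furthermore read $\nabla Q(t) = -\partial_t u(t) - \P\D(u(t)\otimes u(t))$; the first summand lies in $L^\infty \hookrightarrow B^{-1}_{\infty,\infty}$ and the second in $B^{-1}_{\infty,\infty}$ by the continuity property \eqref{eq:LerayNonHomBound}, whence $\nabla Q(t) \in B^{-1}_{\infty,\infty}$. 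Because any polynomial $P$ satisfies $\Delta_m P = 0$ for $m \geq 0$ while $\Delta_{-1}P = P$ (the symbol $\chi$ is identically $1$ near the support of $\widehat{P}$), membership in $B^{-1}_{\infty,\infty}$ forces $P$ to be bounded, hence constant. We conclude $\nabla Q(t)$ is constant and $\deg Q(t) \leq 1$. The main obstacle in the plan is step two: since $\P$ is not a genuine Fourier multiplier on $L^\infty$, the gradient structure of $(\P - \Id)\D(u\otimes u)$ must be handled through the kernel of Proposition \ref{p:Gamma} rather than through the formal identity $\P = \Id + \nabla(-\Delta)^{-1}\D$.
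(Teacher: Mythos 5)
Your argument is correct, but it reaches the polynomial $Q(t)$ by a genuinely different route from the paper's. The paper never invokes Lemma \ref{l:integrability} a second time: it inverts $-\Delta\pi = \partial_j\partial_k(u_ju_k)$ block by block on the homogeneous dyadic decomposition, bounds $\|\dot\Delta_m(\Id-\P)\D(u\otimes u)\|_{L^\infty}$ via Lemma \ref{l:FourierMultiplier}, and then uses Lemma \ref{l:sumBesovHom} to conclude that $\nabla\pi$ and $-(\Id-\P)\D(u\otimes u)$ coincide up to a polynomial gradient. You instead show that $(\P-\Id)\D(u\otimes u)$ annihilates divergence-free test functions, obtain $\nabla R(t)$ from Lemma \ref{l:integrability}, and promote $Q=\pi-R$ from $\mc S'$ to $\R[x]$ by harmonicity (Fourier support at the origin). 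Both are sound, and your degree bound (a polynomial in $B^{-1}_{\infty,\infty}$ is necessarily bounded, hence constant) is essentially the paper's application of $\Delta_{-1}$ to \eqref{eq:L1} together with \eqref{eq:LerayNonHomBound}. What your route buys is independence from the homogeneous Littlewood--Paley reconstruction; what it costs is two kernel-level verifications against the definition of $\P\D$ built from Proposition \ref{p:Gamma}: (a) the adjoint identity $\langle\P\D(u\otimes u),\phi_0\rangle=\langle\D(u\otimes u),\phi_0\rangle$, where on the low-frequency piece one transfers the derivative $\partial_j$ carried by $\Gamma_{j,k,l}$ onto $\phi_{0,j}$ inside the convolution so that $\div\phi_0=0$ kills the term (you flag this, and it is legitimate since every factor is $L^1$ or Schwartz); and (b) the identity $\div\P\D(u\otimes u)=0$, which you use implicitly to deduce $\Delta Q(t)=0$ and which also requires unwinding the kernel definition rather than the formal symbol. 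Neither point is a gap, only detail to supply. Note finally that the paper's formulation of this step, which realizes $(\Id-\P)\D(u\otimes u)$ as a convergent sum of homogeneous blocks, is precisely what is reused in Proposition \ref{p:OpIsHom} and Corollary \ref{c:Leray} to show that this term lies in $\mc S'_h$; so the remainder of the proof of Proposition \ref{p:Euler1} still needs that machinery even if you adopt your variant here.
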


\begin{rmk}\label{r:EulerGalilean}
A consequence of Lemma \ref{l:proofL1} is that any solution of the Euler system \eqref{eq:c2Euler} is obtained from a solution of the projected system \eqref{eq:c2EulerP} from a ``generalized Galilean transformation'', or in other words  writing the projected equations \eqref{eq:c2EulerP} in an accelerated reference frame. It is mainly a matter of taking $g(t) = \nabla Q(t)$ in \eqref{ieq:c2GenGalInv}. We refer to \cite{Kukavica} and \cite{KV} for precise arguments.
\end{rmk}

\begin{proof}[Proof of Lemma \ref{l:proofL1}]
Since $u \in C^1_T(L^\infty)$ is a solution of the Euler equations for the pressure $\pi$, we may write
\begin{equation*}
\partial_t u + \D(u \otimes u) + \nabla \pi = 0 \qquad \text{in } \mc D'(]0, T[ \times \R^d).
\end{equation*}
Now we fix once and for all a time $t \in ]0, T[$. By taking the divergence of this equation, we see that the pressure solves the elliptic equation
\begin{equation*}
- \Delta \pi = \partial_j \partial_k (u_j u_k),
\end{equation*}
in which there is an implicit sum on the repeated indices. We wish to invert this equation in order to recover $\pi$. In order to avoid the singularity of the inverse Laplacian operator $(- \Delta)^{-1}$ at the frequency $\xi = 0$, we localize away from low frequencies. For any $\phi \in \mc S$ such that $\what{\phi} \in \mc D (\R^d \setminus \{ 0 \})$, we have
\begin{equation*}
\big\langle \nabla \pi, \phi \big\rangle_{\mc S' \times \mc S} = \big\langle \nabla (- \Delta)^{-1} \partial_j \partial_k (u_j u_k), \phi \big\rangle_{\mc S' \times \mc S},
\end{equation*}
where the quantity $\nabla (- \Delta)^{-1} \partial_j \partial_k (u_j u_k) = T_{j, k} (u_j, u_k)$ it to be understood in the sense of Proposition \ref{p:c2Gamma}. Therefore, the difference between the two distributions $\nabla \pi$ and $T_{j,k}(u_j u_k)$ is spectrally supported at $\xi = 0$, and must be a polynomial: there exists a $\nabla Q(t) \in \R[X]$ such that 
\begin{equation*}
\nabla \pi = - ({\rm Id} - \P) \D (u \otimes u) + \nabla Q,
\end{equation*}
and the polynomial $\nabla Q$ is indeed a gradient function because all other terms in this equation are gradients. We have shown that \eqref{eq:L1} holds. The assertion on the degree of $Q(t)$ is obtained by applying the low frequency block $\Delta_{-1}$ to \eqref{eq:L1} and noting that
\begin{equation*}
-\nabla Q = \partial_t \Delta_{-1} u + \Delta_{-1}\P \D (u \otimes u) \in C^0_T(L^\infty)
\end{equation*}
is at all times bounded in view Proposition \ref{p:c2Gamma}, thus ending the proof of the lemma.
\end{proof}

Before moving onwards, we study the Leray projection more closely. The following proposition shows that, for any $f \in L^\infty$, the function $({\rm Id} - \P) \D (f)$ is in $\mc S'_h$.

\begin{prop}\label{p:OpIsHom}
Let $\Gamma$ be as defined in Proposition \ref{p:c2Gamma}. Then we have, for all $p \in [1, +\infty]$ and all $f \in L^p$, the estimate
\begin{equation*}
\left\| \chi(\lambda D) \big( \Gamma * f \big) \right\|_{L^p} = O \left( \frac{1}{\lambda} \log (\lambda) \right) \qquad \text{as } \lambda \rightarrow +\infty.
\end{equation*}
The factor $\log(\lambda)$ may be dispensed with whenever $d \geq 3$.
\end{prop}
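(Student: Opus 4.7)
My plan is to reduce the $L^p$ estimate via Young's convolution inequality applied to $\chi(\lambda D)(\Gamma * f) = (\chi(\lambda D)\Gamma) * f$, recasting the statement as a claim about $\|\chi(\lambda D)\Gamma\|_{L^1}$. I will exploit two complementary properties of $\Gamma$: a vanishing mean, and a pointwise decay at infinity.

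To unveil the first, I would compute the Fourier transform of $\Gamma$. Moving the three derivatives from $\psi$ onto $\theta E$ in the second summand of the definition (justified by the Schwartz regularity of $\partial_j\partial_k\partial_l\psi$ and the compact support of $\theta E$), I recognize that $\Gamma = \psi * \partial_j \partial_k \partial_l E$ as tempered distributions. Because the polynomial prefactor $\xi_j \xi_k \xi_l$ annihilates every distribution supported at the origin---in particular the regularization ambiguity affecting $\what E$ in low dimensions---one obtains
\[
\what{\Gamma}(\xi) \;=\; -i\, \chi(\xi)\, \frac{\xi_j \xi_k \xi_l}{|\xi|^2},
\]
a bounded Lipschitz function vanishing at $\xi = 0$, so $\int \Gamma = 0$. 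Writing then $\chi(\lambda D)\Gamma = \psi_\lambda * \Gamma$ with $\psi_\lambda = \lambda^{-d}\check{\chi}(\cdot/\lambda)$ and inserting the zero integral yields $\chi(\lambda D)\Gamma(x) = \int [\psi_\lambda(x-y) - \psi_\lambda(x)]\Gamma(y)\, {\rm d} y$. Combining the translation bound $\|\tau_y \psi_\lambda - \psi_\lambda\|_{L^1} \leq C\min(|y|/\lambda, 1)$ (Taylor expansion for $|y|\leq\lambda$, trivial bound otherwise) with the pointwise decay $|\Gamma(y)| \lesssim (1+|y|)^{-(d+1)}$, Fubini gives
\[
\|\chi(\lambda D) \Gamma\|_{L^1} \;\lesssim\; \frac{1}{\lambda} \int_{|y|\leq \lambda} \frac{|y|\, {\rm d} y}{(1+|y|)^{d+1}} \;+\; \int_{|y|>\lambda} \frac{{\rm d} y}{(1+|y|)^{d+1}} \;\lesssim\; \frac{\log \lambda}{\lambda},
\]
the logarithm arising from the critical integrand $|y|^{-d}$ in spherical coordinates. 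For $d \geq 3$, I would remove it via a scaling argument: for $\lambda \geq 2$ the support of $\xi \mapsto \chi(\lambda\xi)$ sits in $\{|\xi|\leq 1\}$ where $\chi(\xi) \equiv 1$, so the Fourier symbol simplifies and the homogeneity of degree one of $\xi \mapsto \xi_j \xi_k \xi_l/|\xi|^2$ yields $\chi(\lambda D)\Gamma(x) = \lambda^{-d-1}\Gamma(x/\lambda)$ directly, whence $\|\chi(\lambda D)\Gamma\|_{L^1} = \lambda^{-1}\|\Gamma\|_{L^1}$.

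The main obstacle is to establish the pointwise decay $|\Gamma(y)| \lesssim (1+|y|)^{-(d+1)}$. The second summand of $\Gamma$ is rapidly decaying, as the convolution of the Schwartz function $\partial_j\partial_k\partial_l\psi$ with the compactly supported $L^1$-function $\theta E$. For the first summand, one must verify that $\partial_j\partial_k\partial_l((1-\theta)E)$ itself decays like $|y|^{-(d+1)}$ at infinity---immediate in $d \geq 3$, where $E = c|y|^{-(d-2)}$, and more delicate in $d=2$, where $E = c\log|y|$ is unbounded but whose three derivatives do give the right rate. Convolution with $\psi \in \mc S$ then preserves this decay, and the slow logarithmic growth of $E$ at infinity in dimension two is precisely what is responsible for the loss of $\log \lambda$ in the estimate.
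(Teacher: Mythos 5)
Your argument is correct, but it follows a genuinely different route from the paper's. The paper exploits the $\theta$-independence of $\Gamma$ established in Proposition \ref{p:Gamma}: it chooses an adaptive cut-off $\theta_\lambda(x) = \chi(\lambda^{-\alpha}x)$ and estimates the two convolution products $\psi_\lambda * \partial_j\partial_k\partial_l\big((1-\theta_\lambda)E\big)$ and $\partial_j\partial_k\partial_l\psi_\lambda * (\theta_\lambda E)$ directly in $L^1$, obtaining $O(\lambda^{-\alpha} + \lambda^{2\alpha-3}\log\lambda)$ and optimizing at $\alpha=1$; the logarithm there genuinely comes from $\|\theta_\lambda E\|_{L^1}$ in $d=2$. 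You instead identify $\what{\Gamma}(\xi) = -i\,\chi(\xi)\,\xi_j\xi_k\xi_l/|\xi|^2$, deduce the cancellation $\int\Gamma = 0$, and run a H\"ormander-type argument (vanishing mean of $\Gamma$ against the $L^1$-modulus of continuity of $\psi_\lambda$, plus the pointwise decay $|\Gamma(y)|\lesssim (1+|y|)^{-(d+1)}$); here the $\log\lambda$ arises from the borderline integral $\int^\lambda r^{-1}\,{\rm d}r$ in \emph{every} dimension, not from the $d=2$ growth of $E$ as your closing remark suggests, so that interpretation should be dropped. Your scaling identity $\chi(\lambda D)\Gamma = \lambda^{-d-1}\Gamma(\cdot/\lambda)$ for $\lambda \geq 2$ is the most elegant part: since $\chi \equiv 1$ on the support of $\chi(\lambda\,\cdot)$, it is an exact Fourier-side computation, and combined with $\Gamma \in L^1$ it gives $\|\chi(\lambda D)\Gamma\|_{L^1} = \lambda^{-1}\|\Gamma\|_{L^1}$ outright. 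Note that nothing in that computation uses $d \geq 3$: it proves the clean $O(1/\lambda)$ bound in all dimensions $d \geq 2$, so it supersedes both your first argument and the paper's (the stated $\log\lambda$ in $d=2$ is an artifact of the lossier estimates). What the paper's approach buys is self-containedness -- it never needs $\what{\Gamma}$ or the pointwise decay of $\Gamma$ itself, only of $\partial_j\partial_k\partial_l E$; what yours buys is sharpness and a conceptual explanation (degree-one homogeneity of the symbol) for the $1/\lambda$ rate. The one step you should write out in full is the decay $|\Gamma(y)| \lesssim (1+|y|)^{-(d+1)}$, in particular that convolution with $\psi \in \mc S$ preserves the $|y|^{-(d+1)}$ tail (split the convolution integral at $|y| \leq |x|/2$), since the whole first estimate rests on it.
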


\begin{rmk}
As mentioned above, the $O \left( \lambda^{-1} \right)$ estimate (neglecting the logarithmic term when $d=2$) seems rather natural, considering that convolution by $\Gamma$ is formally the Fourier multiplication by a homogeneous symbol of order $1$. However, the first Bernstein inequality is insufficient to obtain this result, as, though it may be generalized to Fourier multipliers, it requires their symbol to be smooth.
\end{rmk} 

\begin{rmk}
In fact, Proposition \ref{p:OpIsHom} could be replaced by a much shorter but much more advanced argument: assume that $f \in L^p$ with $p > 1$. If $1 < p < + \infty$ then Calder\'on-Zygmund theory implies that $\Gamma * f$ is the derivative of a $L^p$ function, so we may legitimately use the first Bernstein inequality to get
\begin{equation*}
\left\| \chi(\lambda D) \big( \Gamma * f \big) \right\|_{L^p} = O \left( \frac{1}{\lambda^{1 + d/p}} \right) \qquad \text{as } \lambda \rightarrow +\infty,
\end{equation*}
but this argument fails for the endpoint exponents, and in particular for $p = + \infty$ which corresponds to our framework for solutions. When $p = + \infty$, we may use the other properties of Singular Integral Operators to see that the map $(- \Delta)^{-1} \partial_j \partial_k \partial_l : L^\infty \tend {\rm BMO}^{-1}$ is bounded. Since the space ${\rm BMO}^{-1}$ is a subspace of $\mc S'_h$ by Proposition \ref{p:c1KockTataruSpH}, we may conclude that $\Gamma * f \in \mc S'_h$. However, we prefer giving an elementary proof based on an integral decomposition of the kernel $\Gamma$ rather than resorting to Fefferman-Stein duality.
\end{rmk}

\begin{proof}[Proof of Proposition \ref{p:OpIsHom}]
We recall that the Fourier multiplier $\Delta_{-1} = \chi (D)$ is equal to the convolution operator $f \mapsto \psi * f$ (see Proposition \ref{p:c2Gamma}). Therefore, we have, for any $\lambda > 0$,
\begin{equation*}
\chi (\lambda D) = \psi_\lambda * , \qquad \text{ where } \psi_\lambda(x) = \frac{1}{\lambda^d} \psi \left( \frac{x}{\lambda} \right).
\end{equation*}
Fix an exponent $\alpha > 0$ and define, for all $\lambda > 0$, the cut-off function $\theta_\lambda (x) = \chi (\lambda^{- \alpha}x)$. By proceeding as in the proof of Proposition \ref{p:c2Gamma}, we have, for $\lambda$ large enough that $\chi(\lambda D)\Delta_{-1} = \chi (\lambda D)$,
\begin{equation*}
\chi(\lambda D) \Gamma = \psi_\lambda * \partial_j \partial_k \partial_l \big( (1 - \theta_\lambda )E \big) + \partial_j \partial_k \partial_l \psi_\lambda * (\theta_\lambda E).
\end{equation*}

To prove the proposition, the Hausdorff-Young convolution inequality asserts that it is enough to show that the $L^1$ norm of both these convolution products is $O(\lambda^{-1} \log(\lambda))$. We start by studying the second one: 
\begin{equation*}
\left\| \partial_j \partial_k \partial_l \psi_\lambda * (\theta_\lambda E) \right\|_{L^1} \leq \left\| \partial_j \partial_k \partial_l \psi_\lambda \right\|_{L^1} \| \theta_\lambda E \|_{L^1}.
\end{equation*}
On the one hand, the three derivatives $\partial_j \partial
_k \partial_l$ provide a $\lambda^{-3}$ decay, as
\begin{equation*}
\left\| \partial_j \partial_k \partial_l \psi_\lambda \right\|_{L^1} = \frac{1}{\lambda^3} \int \left| \left( \partial_j \partial_k \partial_l \psi \right) \left( \frac{x}{\lambda} \right) \right| \frac{\dx}{\lambda^d} = O \left( \frac{1}{\lambda^3} \right).
\end{equation*}
On the other hand, the fundamental solution $E(x)$ is locally integrable, because it has a $O (|x|^{d-2})$ singularity at $x = 0$ when $d \geq 3$, and a $O(\log |x|)$ one if $d=2$. Since the function $\theta_\lambda$ is supported in the ball $B(0, 2\lambda^\alpha)$, we have another inequality: in the case where $d \geq 3$, we get
\begin{equation*}
\| \theta_\lambda E \|_{L^1} \leq C \int_{|x| \leq \lambda^\alpha} \frac{\dx}{|x|^{d-2}} = O(\lambda^{2 \alpha}),
\end{equation*}
and in the case where $d = 2$, we instead have
\begin{equation*}
\begin{split}
\| \theta_\lambda E \|_{L^1} \leq C \int_{|x| \leq \lambda^\alpha}  \log |x| \dx & \leq C \int_0^{\lambda^\alpha} r \log(r) {\rm d} r \\
& = \frac{C}{2} r^2 \Big( \log(r) - \frac{1}{2} \Big) \bigg|_{r=0}^{\lambda^\alpha} = O \left( \lambda^{2 \alpha} \log(\lambda) \right).
\end{split}
\end{equation*}
We conclude that, in all dimensions $d \geq 2$, the convolution product $\partial_j \partial_k \partial_l \psi_\lambda * (\theta_\lambda E)$ has a $L^1$ norm that tends to zero as long as $\alpha < 3/2$, at the speed $O(\lambda^{2 \alpha - 3} \log(\lambda) )$.

\medskip

We next look at the other convolution product $\psi_\lambda * \partial_j \partial_k \partial_l \big( (1 - \theta_\lambda )E \big)$. Here, we will take advantage of the integrability the third derivatives of $E(x)$ possess at $|x| \rightarrow +\infty$. However, this in itself is not enough, as we aim at showing decay as $\lambda \rightarrow + \infty$. We will also have to use the fact that the support of the cutoff $1 - \theta_\lambda$ shrinks as $\lambda$ becomes large. More precisely, we have the estimate, which holds in any dimension $d \geq 2$,
\begin{equation*}
\left| \partial_j \partial_k \partial_l \big( (1 - \theta_\lambda)E \big) (x) \right| \leq C \frac{1 - \mathds{1}_{B(0, 2 \lambda^\alpha)} (x)}{|x|^{d+1}} := M_\lambda (x).
\end{equation*}
By using this on the convolution product, we find that
\begin{equation*}
\left\| \psi_\lambda * \partial_j \partial_k \partial_l \big( (1 - \theta_\lambda )E \big) \right\|_{L^1} \leq C \| \psi_\lambda \|_{L^1} \| M_\lambda \|_{L^1} \leq C \| M_\lambda \|_{L^1}.
\end{equation*}
Finally, we may bound this last integral by
\begin{equation*}
\| M_ \lambda \|_{L^1} \leq C \int_{|x| \geq \lambda^\alpha} \frac{\dx}{|x|^{d+1}} = C \int_{\lambda^\alpha}^{+\infty} \frac{{\rm d}r}{r^2} = O \left( \frac{1}{\lambda^\alpha} \right).
\end{equation*}

\medskip

Putting both estimates together, we have a low frequency inequality for the kernel $\Gamma$. In the limit $\lambda \rightarrow + \infty$, 
\begin{equation*}
\|\chi (\lambda D) \Gamma \|_{L^1} = O \left( \frac{1}{\lambda^\alpha} + \frac{1}{\lambda^{3 - 2 \alpha}} \log (\lambda) \right),
\end{equation*}
and this gives convergence to $0$ as long as $0 < \alpha < 3/2$. Taking $\alpha = 1$ (the optimal value) ends proving our statement.
\end{proof}

Proposition \ref{p:OpIsHom} has the following consequence.

\begin{cor}\label{c:Leray}
Let $f \in B^0_{\infty, \infty} (\R^d ; \R^d \otimes \R^d)$ be a field of matrices. Then we have
\begin{equation*}
\P \D (f) = \sum_{m \in \mathbb{Z}} \dot{\Delta}_m \P \D (f) \qquad \text{ with convergence in } \mc S'.
\end{equation*}
We therefore define a bounded operator $\P \D : B^0_{\infty, \infty} \tend \mathfrak{B}^{-1}_{\infty, \infty} \subset \mc S'_h$. Here, $\mathfrak{B}^{-1}_{\infty, \infty}$ refers to the realization of $\dot{B}^{-1}_{\infty, \infty}$ as a subspace of $\mc S'_h$, see Definition \ref{d:c2BesovRealizSpH}. The same statement holds for the operator $\mathbb{Q} \D = ({\rm Id} - \P)\D$.
\end{cor}

\begin{proof}
Let $A$ be the operator $A = \P \D$ or $A = \Q \D$. Because the symbol of $A$ is a homogeneous function of degree one, Lemma \ref{l:c2FourierMultiplier} makes it clear that 
\begin{equation*}
\forall f \in B^0_{\infty, \infty}, \qquad \| Af \|_{\dot{B}^{-1}_{\infty, \infty}} \leq C \| f \|_{B^0_{\infty, \infty}}.
\end{equation*}
In addition, Proposition \ref{p:OpIsHom} shows that for any $f \in B^0_{\infty, \infty}$, the function $T_{j, k, l}f = (- \Delta)^{-1} \partial_j \partial_k \partial_l f$ lies in $\mc S'_h$, and the first Bernstein inequality (Lemma \ref{l:c2bern}) that
\begin{equation*}
\| \chi(\lambda D) \D (f) \|_{L^\infty} = \| \chi(\lambda D) \Delta_{-1} \D (f) \|_{L^\infty} = O \left( \frac{1}{\lambda} \right) \qquad \text{as } \lambda \rightarrow + \infty.
\end{equation*}
Therefore $Af$ lies in $\mc S'_h$.
\end{proof}

We now have all the necessary elements to prove Proposition \ref{p:c2Euler1}.

\begin{proof}[Proof of Proposition \ref{p:c2Euler1}]
With Lemma \ref{l:proofL1} and Corollary \ref{c:Leray} at our disposal, we are ready to complete the proof of Proposition \ref{p:c2Euler1}, as it is equivalent for $u$ to solve the projected problem \eqref{eq:c2EulerP} and for the polynomial $\nabla Q$ in Lemma \ref{l:proofL1} to be zero. Now, by Corollary \ref{c:Leray}, we see that $\partial_t u$ decomposes as
\begin{equation*}
\partial_t u = - \P \D (u \otimes u) - \nabla Q \in \mc S'_h \oplus \R[X],
\end{equation*}
and so $\nabla Q \equiv 0$ if and only if $\partial_t u (t) \in \mc S'_h$ at all times $t \in ]0, T[$. 

\medskip

We start by showing that assertions \textit{(i)} and \textit{(ii)} of Theorem \ref{t:c2Euler} are equivalent: assume that condition \textit{(ii)} holds, so that $u(t) - u(0) \in \mc S'_h$ for all $t \in [0, T[$. By differentiating with respect to time, we see that $\partial_t u (t)$ is also in $\mc S'_h$ for all $t \in ]0, T[$. Indeed, since $u \in C^1_T(L^\infty)$ the difference quotients $h^{-1}(u(t+h) - u(t))$ will converge to its time derivative $\partial_t u(t)$ for the norm topology of $L^\infty$, and the fact that the difference quotients are in $\mc S'_h \cap L^\infty$ insures that $\partial_t u(t)$ also is, because $\mc S'_h \cap L^\infty$ is a closed subspace of $L^\infty$ (see Proposition \ref{p:c2LinftyHClosed}). This implies that $\nabla Q \equiv 0$ and that $u$ solves the projected problem \eqref{eq:c2EulerP} on $]0, T[ \times \R^d$. 

The fact that it does so with initial datum $u(0)$, and in the sense of Definition \ref{d:c2EulerP}, simply stems from the $C^1$ regularity with respect to time. 

Finally, we show that $V = u_0 - u(0)$ is a constant function.  Any solution of the projected problem \eqref{eq:c2EulerP} must also be a solution of the Euler problem \eqref{eq:c2Euler} with the same initial datum. Since $u$ solves both problems (in the sense of Definitions \ref{d:c2weakEuler} and \ref{d:c2EulerP}), this implies that the weak form \eqref{eq:c2weakMomentum} of the momentum equation must hold for both initial data, so
\begin{equation*}
\int \Big( u_0 - u(0) \Big) \cdot \phi \dx = 0
\end{equation*}
for all divergence-free $\phi \in \mc D (\R^d ; \R^d)$, and so $V \in L^\infty$ is the gradient of some tempered distribution $V = \nabla h$. But since $V$ is also divergence-free, $h$ must be a harmonic polynomial, so that $V$ is also polynomial. The fact that $V \in L^\infty$ forces $V$ to be constant. We have shown \textit{(ii)} to be true.

\medskip

Next, we instead assume that condition \textit{(i)} holds. By integrating with respect to time, we have
\begin{equation*}
u(t) - u(0) = \int_0^t \partial_t u(s) {\rm d}s,
\end{equation*}
this integral being well defined as an element of the Banach space $L^\infty$ (\textsl{e.g.} as a limit of Riemann sums). The fact that $\mc S'_h \cap L^\infty$ is closed in $L^\infty$ for the strong topology implies that we must also have $u(t) - u(0) \in \mc S'_h$.

\medskip

Condition \textit{(iii)} implies \textit{(ii)} because ${\rm BMO}^{-1} \subset \mc S'_h$. And if \textit{(i)} is true, then the difference $u(t) - u(0)$ is given by
\begin{equation*}
u(t) - u(0) = \int_0^t \partial_t u(s) {\rm d} s = - \int_0^t \P \D (u \otimes u) {\rm d} s.
\end{equation*}
Because $u \in C^0([0, T[; L^\infty)$, the function $\P \D (u \otimes u)$ lies in $C^0([0, T[; {\rm BMO}^{-1})$. Therefore the integrals above are well defined (as limits of Riemann sums) in the Banach space ${\rm BMO}^{-1}$, thus implying \textit{(iii)}.

\medskip

Concerning conditions \textit{(iv)}, \textit{(v)} and \textit{(vi)}, if aassetion \textit{(i)} holds, then the pressure is given by Leray projection in the form of a singular integral operator
\begin{equation*}
\pi = (- \Delta)^{-1} \partial_j \partial_k (u_j u_k) \qquad \text{in } C^0(]0, T[; {\rm BMO}).
\end{equation*}
In the above, time $t=0$ is excluded: because the initial datum $u_0$ may be differ from the initial flow $u(0)$ by a constant, the pressure may exhibit a singularity at initial time
\begin{equation*}
\nabla \pi = - \nabla (- \Delta)^{-1} \partial_j \partial_k (u_j u_k) + \delta_0 \otimes \big( u(0) - u_0 \big).
\end{equation*}
At any rate, we deduce that both \textit{(iv)} and \textit{(v)} are true, and point \textit{(vi)} follows from Proposition \ref{p:logBMO}. Finally, assume \textit{(v)}, which is the weakest of the three conditions \textit{(iv)}, \textit{(v)} and \textit{(vi)} by Lemma \ref{l:c2KukavicaVicolCondition} and Proposition \ref{p:logBMO}. Then, for every time $t \in ]0, T[$, the derivative $\partial_t u(t)$ must lie in $\mc S'_h$. By the discussion above concerning points \textit{(i)} and \textit{(ii)}, it appears that the polynomial $\nabla Q(t)$ must be supported at time $t = 0$ and that the solution fulfills \textit{(i)}. 
\end{proof}

\subsection{Equivalence of the Two Formulations: Low Time Regularity}

In this paragraph, we fully prove Theorem \ref{t:c2Euler} by working with the more general class of $C^0_T(L^\infty)$ solutions of \eqref{eq:c2Euler}. The obvious problem is that the derivative $\partial_t u$ might not exist as a function of time for these solutions. We will use a regularization procedure to circumvent this issue.

\begin{proof}[Proof of Theorem \ref{t:c2Euler}]
Because we deal with solutions which have low time regularity, we take a mollification sequence $\big( K_\epsilon (t) \big)_{\epsilon > 0}$ such that $K_1$ is supported in the compact interval $[-1, 1]$, $\| K_1 \|_{L^1} = 1$ and
\begin{equation*}
K_\epsilon (t) = \frac{1}{\epsilon} K_1 \left( \frac{t}{\epsilon} \right).
\end{equation*}
The function $K_1$ is nonnegative by definition. Next, we extend all functions $u$ and $\pi$ to functions on the \textit{open} set $t \in ]- \infty, T[$ by setting them to zero on $]- \infty, 0[$. For the sake of simplicity, we continue to note $u$ and $\pi$ the extensions. This allows us to incorporate the initial data condition in the righthand side of the equations, and see that $(u, \pi)$ solves
\begin{equation*}
\partial_t u + \D (u \otimes u) + \nabla \pi = \delta_0(t) \otimes u_0 (x) \qquad \text{in } \mc D' \big( ]- \infty, T[ \times \R^d \big).
\end{equation*}
For all $\epsilon > 0$, we note $(u_\epsilon, \pi_\epsilon) = K_\epsilon * (u, \pi)$, where the convolution is done with respect to the time variable $t \in ]- \infty, T - \epsilon [$, as the kernel $K_\epsilon$ is supported in $[- \epsilon, \epsilon]$. We obtain, by convoluting with respect to time, an equality in the sense of $\mc D' (]- \infty, T - \epsilon[ \times \R^d)$:
\begin{equation}\label{NVproofEQ1}
\begin{split}
\partial_t u_\epsilon + K_\epsilon * \D (u \otimes u) + \nabla \pi_\epsilon & = (K_\epsilon * \delta_0) (t) \otimes u_0 (x)\\
& = K_\epsilon (t) u_0(x).
\end{split}
\end{equation}
By taking the divergence of this equation and the $m$-th homogeneous dyadic block $\dot{\Delta}_m$, we see that
\begin{equation*}
\forall m \in \mathbb{Z}, \qquad \dot{\Delta}_m \nabla \pi_\epsilon = \sum_{j, k} \dot{\Delta}_m \nabla (- \Delta)^{-1} \partial_j \partial_k K_\epsilon * (u_j u_k).
\end{equation*}
Because, for all $t \in ]- \infty, T - \epsilon [$, the functions $K_\epsilon * (u_j u_k) (t)$ are bounded, Corollary \ref{c:Leray} insures that we can sum the previous equation over $m \in \mathbb{Z}$ to get
\begin{equation*}
\nabla \pi_\epsilon = \nabla Q_\epsilon (t) - K_\epsilon * ({\rm Id} - \P) \D (u \otimes u),
\end{equation*}
for some polynomial $Q_\epsilon (t) \in \R[X]$. By substituting this expression in \eqref{NVproofEQ1}, we find that
\begin{equation}\label{eq:nonRegTimeEQ1}
\partial_t u_\epsilon + K_\epsilon * \P \D (u \otimes u) + \nabla Q_\epsilon = K_\epsilon u_0, \qquad \text{for } t \in ]- \infty, T - \epsilon [.
\end{equation}
By taking the limit $\epsilon \rightarrow 0^+$ in the previous equation, we see that all terms converge in $\mc D' (]- \infty, T[ \times \R^d)$ to some limit.\footnote{For any fixed value of $\epsilon > 0$, the different terms do not really define a distribution on $]- \infty, T[ \times \R^d$, but duality bracket involving a test function $\phi \in \mc D (]- \infty, T[ \times \R^d)$ will always be defined for $\epsilon$ small enough.} Therefore, the $\nabla Q_\epsilon$ must also have a limit as $\epsilon \rightarrow 0^+$, which we will note $\nabla Q$, and which satisfies
\begin{equation*}
\partial_t u + \P \D (u \otimes u) + \nabla Q = \delta_0 (t) \otimes u_0 (x) \qquad \text{in } \mc D' \big( ]- \infty, T[ \times \R^d \big).
\end{equation*}
By taking the time convolution of this last equation by $K_\epsilon (t)$, we see that we in fact have $\nabla Q_\epsilon = K_\epsilon * \nabla Q$. The whole proof hinges on finding under what condition $\nabla Q = 0$ in $\mc D' (]0, T[ \times \R^d)$.

\begin{rmk}
In equation \eqref{eq:nonRegTimeEQ1}, the polynomial $Q_\epsilon(t) \in \R[X]$ is defined for every time $t \in ]- \infty, T - \epsilon[$. This choice makes sense because all functions implied in \eqref{eq:nonRegTimeEQ1} are $C^\infty$ smooth with respect to time. However, $Q_\epsilon$ has no reason to define a distribution on $]- \infty, T - \epsilon[$. It is determined by $\pi_\epsilon$ (which is, by assumption, a distribution) \textsl{up to a constant summand} $C(t)$ which may be very singular, $C(t) = |t|^{-1}$ for example, or not even measurable. 

By contrast, the derivative $\nabla Q_\epsilon$ is a distribution on $]- \infty, T - \epsilon[$, and converges to a distribution $\nabla Q$ in the limit $\epsilon \rightarrow 0^+$. This convergence does not concern $Q_\epsilon$, so $Q$ is not \textsl{per se} a well defined object. But this poses no problem as we will only work with the (formal) derivative $\nabla Q$.
\end{rmk}

\medskip

We start by assuming that condition \textit{(i)} holds, so that $\nabla Q = 0$ on $]0, T[ \times \R^d$. This implies that $\nabla Q_\epsilon (t) = 0$ for $t \in [\epsilon, T - \epsilon]$, since $K_\epsilon$ is compactly supported in $[- \epsilon, \epsilon]$. Similarly, the term containing the initial datum $u_0$ in \eqref{NVproofEQ1} vanishes as soon as $t \geq \epsilon$. Therefore, for $t \in [\epsilon, T - \epsilon]$,
\begin{equation*}
\partial_t u_\epsilon (t) = - K_\epsilon * \P \D (u \otimes u) \in \mc S'_h.
\end{equation*}
Therefore, by integrating with respect to time, we find that
\begin{equation*}
u_\epsilon (t) - u_\epsilon (\epsilon) = \int_\epsilon^t \partial_t u_\epsilon (s) \text{\rm d} s \in L^\infty \cap \mc S'_h.
\end{equation*}
Since $u_\epsilon \in C^\infty(L^\infty)$, this last integral is well-defined as an element of the Banach space $L^\infty \cap \mc S'_h$ (see Proposition \ref{p:c2LinftyHClosed}) as, \textsl{e.g.} a limit of Riemann sums. We wish to take the limit $\epsilon \rightarrow 0^+$ in this last equation and use the fact that $L^\infty \cap \mc S'_h$ is closed in $L^\infty$.

Thanks to the convergence $u_\epsilon \tend u$ in the space $C^0(]0, T[ ; L^\infty)$, which is equipped with the topology of uniform convergence on compact sets of $]0, T[$, we deduce on the one hand that we have pointwise convergence $u_\epsilon (t) \tend u(t)$. On the other hand, to compute the limit of $u_\epsilon(\epsilon)$, we write
\begin{equation*}
\begin{split}
\| u_\epsilon (\epsilon) - u(0) \|_{L^\infty} & \leq \| u_\epsilon (\epsilon) - u (\epsilon) \|_{L^\infty} + \| u (\epsilon) - u(0) \|_{L^\infty} \\
& = \| u_\epsilon (\epsilon) - u (\epsilon) \|_{L^\infty} + o(1).
\end{split}
\end{equation*}
Next, by writing explicitly the convolution integrals involved in $u_\epsilon (\epsilon) - u (\epsilon)$ and using the fact that the kernel $K_\epsilon$ is supported in $[-\epsilon, \epsilon]$, we get that
\begin{equation*}
u_\epsilon (\epsilon) - u (\epsilon) = \int_0^{2 \epsilon} K_\epsilon (\epsilon - s) u(s) {\rm d} s - u(\epsilon) = \int_0^{2 \epsilon} K_\epsilon (\epsilon - s) \Big\{ u(s) - u(\epsilon) \Big\} {\rm d} s,
\end{equation*}
The function $u \in C^0([0, T[ ; L^\infty)$ is uniformly continuous on any compact interval $[0, \eta]$. Therefore, we may take the $L^\infty$ norm of the above equation to obtain
\begin{equation*}
\| u_\epsilon (\epsilon) - u (\epsilon) \|_{L^\infty} \leq \| K_\epsilon \|_{L^1} \sup_{s \in [0, 2 \epsilon]} \| u(s) - u(\epsilon) \|_{L^\infty} \tend 0 \qquad \text{as } \epsilon \rightarrow 0^+.
\end{equation*}
This proves that $u_\epsilon (t) - u_\epsilon (\epsilon)$ converges to $u(t) - u(0)$ as $\epsilon \rightarrow 0^+$. Because $L^\infty \cap \mc S'_h$ is closed in $L^\infty$, we finally deduce that $u(t) - u(0) \in \mc S'_h$ for all $0 \leq t < T$.

\medskip

Now, let us assume that $u(t) - u(0) \in \mc S'_h$ for all $t \in [0, T[$. We take the convolution of $u(t) - u(0)$ with $K_\epsilon$ to find that $u_\epsilon(t) - u(0)$ also lies in $L^\infty \cap \mc S'_h$, but only for $t \in ]\epsilon, T - \epsilon[$, as the condition $u(t) - u(0) \in \mc S'_h$ does not hold on the extension for $t \notin [0, T[$, as $u(0)$ need not be in $\mc S'_h$. Since $u_\epsilon (t) - u(0)$ is a $C^1$ function with respect to time, we can differentiate. The derivative $\partial_t u_\epsilon$ will then be found as the $L^\infty$ limit of the difference quotients, and will therefore also be an element of $L^\infty \cap \mc S'_h$,
\begin{equation*}
\forall t \in ]\epsilon, T - \epsilon [, \qquad \partial_t u_\epsilon (t) \in \mc S'_h.
\end{equation*}
This implies that the polynomial $\nabla Q_\epsilon (t)$ from \eqref{eq:nonRegTimeEQ1} must also be in $\mc S'_h$ if $t \in ]\epsilon, T - \epsilon[$, and so must be zero on that time interval. Therefore, $\nabla Q \equiv 0$ in $\mc D' (]0, T[ \times \R^d)$. This means that $u$ solves the projected system \eqref{eq:c2EulerP}, we just have to check the assertion concerning the initial datum for the velocity.

Since the function $u$ solves the projected equation, we may use it to prove that $u$ is in fact $C^1_T(\dot{B}^{-1}_{\infty, \infty})$ regular. The $C^1$ time regularity implies that $u$ will be a weak solution of the projected equation with initial datum $u(0)$, as in Definition \ref{d:c2EulerP}.

Lastly, since $u$ solves the projected problem with initial datum $u(0)$, it must solve the Euler system with the datum $u(0)$. Therefore, $u$ is a weak solution of the Euler equations, as in Definition \ref{d:c2weakEuler} with both initial data $u_0$ and $u(0)$, and so $u_0 - u(0)$ must be a constant function, as shown in Subsection \ref{ss:c2C1proof} above. This shows the equivalence of points \textit{(i)} and \textit{(ii)}.

\medskip

Now that we have established that \textit{(i)} and \textit{(ii)} are equivalent, we may focus on the last three points. Of course, \textit{(iii)} implies \textit{(ii)}. And if $u$ is a solution of the projected problem, so that \textit{(i)} holds, we must have
\begin{equation*}
\partial_t u = - \P \D (u \otimes u) \qquad \text{in } \mc D'(]0, T[ \times \R^d).
\end{equation*}
This implies that $\partial_t u$ is uniformly continuous with respect to $t \in ]0, T[$ in the ${\rm BMO}^{-1}$ topology, so that for any $t \in [\eta, T[$ the following integral is well defined:
\begin{equation*}
u(t) - u(\eta) = - \int_\eta^t \P \D (u \otimes u) {\rm d}s \tend_{\eta \rightarrow 0^+} \; - \int_0^t \P \D (u \otimes u) {\rm d}s \qquad \text{in } {\rm BMO}^{-1}.
\end{equation*}
Because $u \in C^0([0, T[; L^\infty)$, the lefthand side of this equation converges (in $L^\infty$) to $u(t) - u(0)$ as $\eta \rightarrow 0^+$. Uniqueness of the limit implies that \textit{(iii)} holds.

\medskip

Finally, if $u$ is a solution of the projected problem (that is, if \textit{(i)} is true), then we may conclude to \textit{(iv)}, \textit{(v)} and \textit{(vi)} exactly as in the proof of Proposition \ref{p:c2Euler1}. On the other hand, if \textit{(v)} is true (recall that it is the weaksest of the three conditions \textit{(iv)}, \textit{(v)} and \textit{(vi)} by Lemma \ref{l:c2KukavicaVicolCondition} and Proposition \ref{p:logBMO}), then we check that $\nabla \pi_\epsilon (t) \in \mc S'_h$. For any test function $\phi \in \mc S$ and any $t \in ]\epsilon, T - \epsilon[$,
\begin{equation*}
\big\langle \chi(\lambda D) \nabla \pi_\epsilon, \phi \big\rangle_{\mc S' \times \mc S} = \int K_\epsilon (t - s) \big\langle \chi(\lambda D) \nabla \pi(s), \phi \big\rangle_{\mc S' \times \mc S} \, {\rm d} s.
\end{equation*}
Because $\nabla \pi \in C^0(]0, T[; \mc S')$, the bracket in the integral is a continuous function of time that converges to zero as $\lambda \rightarrow + \infty$ (by assumption) while remaining uniformly bounded. Therefore, dominated convergence guarantees that the whole integral tends to zero as $\lambda \rightarrow + \infty$, hence $\nabla \pi_\epsilon (t) \in \mc S'_h$. With that remark in mind, bringing the proof to its conclusion is only a matter of noticing that
\begin{equation*}
\nabla \pi_\epsilon = K_\epsilon * \nabla (- \Delta)^{-1} \partial_j \partial_k (u_j u_k) + \nabla Q_\epsilon \in \mc S'_h \oplus \R[X].
\end{equation*}
Since $\nabla \pi_\epsilon \in \mc S'_h$, the polynomial must be zero $\nabla Q_\epsilon = 0$ and we deduce that $\nabla Q = 0$ on $]0, T[ \times \R^d$.
\end{proof}

\section{Application to the Euler problem}

In this short paragraph, we provide two applications of Theorem \ref{t:c2Euler} to, firstly, a well-posedness result in Besov spaces and, secondly, Serfati solutions of the Euler system.

\subsection{A Full Well-Posedness Result}

Theorem \ref{t:c2Euler} above provides us with a full well-posedness result for the Euler system in the space $C^0_T(B^1_{\infty, 1})$.

\begin{cor}
Consider $u_0 \in B^1_{\infty, 1}$ a divergence-free initial datum. There exists a time $T > 0$ such that the Euler problem \eqref{eq:c2Euler} has a unique solution $u \in C^0 ( [0, T[ ; B^1_{\infty, 1})$ (in the sense of Definition \ref{d:c2weakEuler}) related to the initial datum $u_0$ that satisfies
\begin{equation}\label{eq:c2corCond}
u(0) = u_0 \qquad \text{and} \qquad u(t) - u(0) \in \mc S'_h \text{ for } t \in [0, T[.
\end{equation}
Moreover, this solutions lies in the space $C^1([0, T[ ; B^0_{\infty, 1})$.
\end{cor}

\begin{rmk}
Note that condition \eqref{eq:c2corCond} is a rather natural one for a well-posedness result, as $u(t) - u(0) \in \mc S'_h$ implies that the flow is, unlike the Poiseuille-type flow \eqref{ieq:c2Unif}, not driven by an exterior action (recall that $\mc S'_h$ is a space of functions that are ``on average'' zero at $|x| \rightarrow + \infty$). A flow that is left to its own devices must be deterministic.
\end{rmk}

\begin{proof}
Thanks to Theorem \ref{t:c2Euler}, the proof is straightforward. With the help of the results of \cite{PP}, we have a $T > 0$ such that the projected problem has a unique solution $u \in C^0_T (B^1_{\infty, 1}) \cap C^1_T(B^0_{\infty, 1})$ with initial datum $u_0$. This solution must also solve the original Euler problem with the same initial datum, as well as fulfilling \eqref{eq:c2corCond}.

On the other hand, by Theorem \ref{t:c2Euler}, any solution $v \in C^0_T(B^1_{\infty, 1}) \hookrightarrow C^0_T (L^\infty)$ that fulfills condition \eqref{eq:c2corCond} must also solve the projected problem \eqref{eq:c2EulerP} with the initial datum $u_0$, and so must coincide with $u$ on $[0, T[ \times \R^d$.
\end{proof}

\subsection{Serfati Solutions}\label{ss:c2Serfati}

In this paragraph, we work exclusively on the 2D plane $\R^2$, so that $d = 2$.

\medskip

Serfati solutions of the Euler equations\footnote{Many thanks are due to Raphaël Danchin for introducing us to this topic and to Drago\c s Iftimie for asking the questions studied in this paragraph.} are a class of 2D bounded solutions introduced by Philippe Serfati in \cite{Serfati} (see also \cite{AKLFNL} for an english presentation). These solutions have a threefold description: a solution $u \in C^0(\R_+ ; L^\infty (\R^2))$ is a \textsl{Serfati solution} if
\begin{enumerate}[(i)]
\item both $u$ and the vorticity $\omega = \partial_1 u_2 - \partial_2 u_1$ are uniformly bounded
\begin{equation}\label{eq:c2SerfatiClass}
\sup_{t \geq 0} \Big( \| u \|_{L^\infty} + \| \omega \|_{L^\infty} \Big) < + \infty,
\end{equation}
\item the vorticity solves the usual pure transport equation $\partial_t \omega + u \cdot \nabla \omega = 0$ in the sense of distributions,
\item the velocity $u$ satisfies the \textsl{Serfati identity}: for any radial cut-off function $\theta \in \mc D$ such that $\theta(x) = 1$ around $x = 0$, we have
\begin{equation}\label{eq:c2SerfatiIdentity}
u(t) - u(0) = (\theta K) * \big( \omega(t) - \omega(0) \big) - \int_0^t \nabla \nabla^\perp \big( (1 - \theta)K \big) * (u \otimes u) {\rm d}s,
\end{equation}
where in the equation above $K(x) = x^\perp / |x|^2$ is the Biot-Savart kernel and the convolution product contains an implicit contraction between both $2 \times 2$ matrices $\nabla \nabla^\perp \big( (1 - \theta)K \big)$ and $u \otimes u$.
\end{enumerate}

The Serfati identity follows naturally from the Euler equations in their vorticity form. Formally, the velocity flow satisfies $u = K*\omega$ and hence
\begin{equation*}
\partial_t u = (\theta K) * \partial_t \omega + \big( (1 - \theta) K \big) * \partial_t \omega.
\end{equation*}
On the one hand, the first convolution product $(\theta K)*\omega$ makes sense if $\omega \in L^\infty$ because the Biot-Savart kernel is locally integrable. Integration with respect to time gives the first term in the righthandside of \eqref{eq:c2SerfatiIdentity}. On the other hand, by substituting $\partial_t \omega = - u \cdot \nabla \omega = \curl \D (u \otimes u)$ and integrating by parts, we obtain the second term:
\begin{equation*}
\int_0^t \big( (1 - \theta) K \big) * \partial_t \omega {\rm d}s = -\int_0^t \nabla \nabla^\perp \big( (1 - \theta) K \big) * (u \otimes u) {\rm d}s.
\end{equation*}

\medskip

A notable feature of Serfati solutions is that they lead to \textsl{global a priori estimates} for the quantity $\| u \|_{L^\infty} + \| \omega \|_{L^\infty}$ and to a well-posedness result: for any $u_0 \in L^\infty$ with $\omega_0 = \curl(u_0) \in L^\infty$, there exists a unique Serfati solution with initial datum $u_0$.

As noted in Section 7 of \cite{AKLFNL}, this means that requiring a solution of the Euler system to fulfill the Serfati identity rules out the pathological solutions, such as \eqref{ieq:c2Unif}, that create non-uniqueness. In fact, as is deeply discussed in \cite{Kelliher}, a (weak) solution of the Euler system \eqref{eq:c2Euler} which satisfies \eqref{eq:c2SerfatiClass} is uniquely determined by the initial value and an asymptotic function $U_\infty(t)$ if one of the following sufficient conditions is satisfied:
\begin{enumerate}[(i)]
\item a ``generalized'' Serfati identity holds, where the uniform flow $U_\infty(t)$ is added to \eqref{eq:c2SerfatiIdentity},
\item the velocity satisfies a renormalized Biot-Savart law: if $\theta$ is as above and $\theta_R(x) = \theta(x/R)$, then
\begin{equation*}
(\theta_R K) * \big( \omega(t) - \omega(0) \big) \tend_{R \rightarrow + \infty} U_\infty(t) + u(t) - u(0)
\end{equation*}
locally uniformly in the sense of the norm \eqref{eq:c2SerfatiClass},
\item the pressure $\pi(t, x)$ satisfies 
\begin{equation*}
\pi(t, x) - U_\infty'(t) \cdot x \; \in L^\infty_T({\rm BMO}).
\end{equation*}
\end{enumerate}

\medskip

Here, we show the relation between Serfati solutions and our own result: more precisely, we prove that a Serfati solution fulfills the equivalent conditions of Theorem \ref{t:c2Euler}, and hence is a solution of the projected problem.

\begin{thm}
Let $u \in C^0(\R_+ ; L^\infty)$ be a Serfati solution of the Euler system. Then, for all $\alpha < 1$,
\begin{equation*}
\left\| \chi(\lambda D) \big( u(t) - u(0) \big) \right\|_{L^\infty} = O \left( \frac{1}{\lambda^\alpha} \right) \qquad \text{as } \lambda \rightarrow + \infty.
\end{equation*}
In particular, $u$ is a solution of the projected problem \eqref{eq:c2EulerP} with initial value $u(0)$.
\end{thm}

\begin{rmk}
Serfati solutions are more general than the $B^1_{\infty, 1}$ solutions of Pak and Park \cite{PP}, because $\omega \in L^\infty$ does not imply that $\nabla u \in L^\infty$, as would be the case if we had $u \in B^1_{\infty, 1}$. On the other hand, Serfati solutions form a more restrictive framework for the Euler equations than our simple $C^0(L^\infty)$ context: firstly, Serfati solutions are only defined in 2D, and secondly Theorem \ref{t:c2Euler} holds for very rough solutions, whereas the Serfati identity requires $\omega \in L^\infty$ to be written.
\end{rmk}

\begin{proof}
Let, as usual, $\psi_\lambda \in \mc S$ such that $\what{\psi_\lambda}(\xi) = \chi(\lambda \xi)$ and consider the convolution product of the Serfati identity \eqref{eq:c2SerfatiIdentity}
\begin{equation*}
\psi_\lambda * \big( u(t) - u(0) \big) = \psi_\lambda * (\theta K) * \big( \omega(t) - \omega (0) \big) - \int_0^t \psi_\lambda * \nabla \nabla^\perp \big( (1 - \theta)K \big)*(u \otimes u) {\rm d}s.
\end{equation*}
This equation is true for all nonnegative radial $\theta \in \mc D$ such that $\theta(x) = 1$ near $x=0$. As in the proof of Proposition \ref{p:OpIsHom}, our aim is to estimate the convolution products with the Hausdorff convolution inequality to obtain our result. We focus on the first term. Integration by parts gives
\begin{equation}\label{eq:c2SerfatiEQ1}
\begin{split}
\Big\| \psi_\lambda * (\theta K) * \big( \omega(t) - \omega (0) \big) \Big\|_{L^\infty} & = \left\| \nabla^\perp \psi_\lambda * (\theta K) * \big( u(t) - u(0) \big) \right\|_{L^\infty} \\
& \leq \| \nabla \psi_\lambda \|_{L^1} \| \theta K \|_{L^1} \| u(t) - u(0) \|_{L^\infty} \\
& \leq \frac{1}{\lambda} \| \nabla \psi \|_{L^1} \| \theta K \|_{L^1} \| u(t) - u(0) \|_{L^\infty}.
\end{split}
\end{equation}
We leave here, for now, this first term and come to the second one. Similarly, the Hausdorff convolution inequality gives
\begin{equation*}
\begin{split}
\left\| \psi_\lambda * \nabla \nabla^\perp \big( (1 - \theta)K \big)*(u \otimes u) \right\|_{L^\infty} & \leq \| \psi _\lambda \|_{L^1} \big\| \nabla^2 \big( (1 - \theta)K \big) \big\|_{L^1} \| u \|_{L^\infty}^2 \\
& \leq \big\| \nabla^2 \big( (1 - \theta)K \big) \big\|_{L^1} \| \psi \|_{L^1} \| u \|_{L^\infty}^2,
\end{split}
\end{equation*}
but this is not sufficient: the upper bound we have found does not depend on $\lambda$, so it will not decay as $\lambda \rightarrow + \infty$. To force this upper bound to be small, we take advantage of the fact that the Serfati identity holds for all radial cut-off functions $\theta \in \mc D$. Fix a such $\theta \in \mc D$ that is supported in the ball $|x| \leq 1$ and define $\theta_\lambda(x) = \theta(x/\lambda^\alpha)$ for some $\alpha > 0$ to be fixed later. Then we have
\begin{equation*}
\big\| \nabla^2 \big( (1 - \theta)K \big) \big\|_{L^1} \leq C \int_{\lambda^\alpha}^{+ \infty} \frac{{\rm d}r}{r^2} = O \left( \frac{1}{\lambda^\alpha} \right) \qquad \text{as } \lambda \rightarrow + \infty,
\end{equation*}
and therefore
\begin{equation}\label{eq:c2SerfatiEQ2}
\left\| \int_0^t \psi_\lambda * \nabla \nabla^\perp \big( (1 - \theta)K \big)*(u \otimes u) {\rm d}s \, \right\|_{L^\infty} = O \left( \frac{1}{\lambda^\alpha} \right) \qquad \text{as } \lambda \rightarrow + \infty.
\end{equation}
On the other hand, our choice of cut-off function comes at a cost: because the support of $\theta_\lambda$ grows as $\lambda$ does, we lose decay in \eqref{eq:c2SerfatiEQ1}:
\begin{equation*}
\frac{1}{\lambda} \| \theta_\lambda K \|_{L^1} = O \left( \frac{1}{\lambda^{1 - \alpha}} \right) \qquad \text{as } \lambda \rightarrow + \infty.
\end{equation*}
By putting this together with \eqref{eq:c2SerfatiEQ2} and choosing the optimal $\alpha = 1/2$, we see that $\chi(\lambda D) (u(t) - u(0))$ does indeed tend to zero in $L^\infty$ at the speed $O (\lambda^{-1/2})$. With Theorem \ref{t:c2Euler}, this is enough to prove that $u$ is a solution of the projected problem.

\medskip

However, we are not entirely sated, because the $O(\lambda^{-1/2})$ does not seem optimal: by Proposition \ref{p:OpIsHom}, we know that the decay is truly of order roughly $O (\lambda^{-1})$. By revisiting the computations above, we see that the estimates in \eqref{eq:c2SerfatiEQ1} are not really optimal: because convolution by $\psi_\lambda$ is a low frequency cut-off operator $\chi(\lambda D)$, we have
\begin{equation*}
\begin{split}
\psi_\lambda * (\theta K) * \big( \omega(t) - \omega (0) \big) & = \psi_{2 \lambda} * \psi_\lambda * (\theta K) * \big( \omega(t) - \omega (0) \big) \\
& = \nabla \psi_\lambda * (\theta_\lambda K) * \Big( \psi_{2 \lambda} * (u(t) - u(0)) \Big).
\end{split}
\end{equation*}
Now, thanks to the estimates we have just found, we know that $\psi_{2 \lambda} * (u(t) - u(0))$ also decays in $L^\infty$ at the speed $O (\lambda^{-1/2})$. By taking this into account, we obtain
\begin{equation*}
\Big\| \psi_\lambda * (\theta K) * \big( \omega(t) - \omega (0) \big) \Big\|_{L^\infty} = O \left( \frac{1}{\lambda^{3/2 - \alpha}} \right) \qquad \text{as } \lambda \rightarrow + \infty
\end{equation*}
and by setting $\alpha = 3/4$ we obtain a $O(\lambda^{-3/4})$ estimate instead of a $O(\lambda^{-1/2})$ one, and we can once again use this improved bound in \eqref{eq:c2SerfatiEQ1}... By iterating, we find that the bound may be brought to $O (\lambda^{- \alpha})$ for any $\alpha < 1$, but at the price of a constant that depends on $\alpha$.
\end{proof}

\section{Application to Ideal MHD and Elsässer Variables}

In this Section, we study the ideal magnetohydrodynamic (MHD) equations. These describe ideal conducting fluids which are subject to the magnetic field generated through their own motion. The ideal MHD equations read:
\begin{equation}\label{ieq:c3mhd}
\begin{cases}
\partial_t u + (u \cdot \nabla)u + \nabla \pi = (b \cdot \nabla)b\\
\partial_t b + (u \cdot \nabla)b = (b \cdot \nabla)u\\
\D(u) = 0.
\end{cases}
\end{equation}
In the equations above, $\pi : \R \times \R^d \tend \R$ is the MHD pressure, which is the sum of the usual hydrodynamic pressure and the magnetic pressure $\frac{1}{2}|b|^2$. Formally, the MHD equations are a generalization of the Euler system, as the latter can be recovered by setting $b=0$. However, this obscures the fact that MHD possesses its own interesting and intriguing properties, such as non-trivial static solutions (with $u=0$), additional conserved quantities, peculiar stability issues, a much more intricate theory of solutions, \textsl{etc}. We refer to the excellent book \cite{Bellan} for an overview of some of these issues.

\medskip

Formally, system \eqref{ieq:c3mhd} is overdetermined, as it is a system of $2d+2$ equations whereas there are $2d+1$ scalar unknowns $(u, b, \pi)$. However, the divergence condition $\D (b) = 0$ is in fact redundant, because it automatically follows from the magnetic field equation, provided that the initial datum $\D (b_0) = 0$ is also divergence-free:
\begin{equation*}
\partial_t \D (b) = \partial_k \partial_j \big( u_k b_j - b_k u_j \big) = 0,
\end{equation*}
where, once again, there is an implicit summation on both repeated indices $j, k = 1, ..., d$.

\medskip

By using a suitable change of variables, introduced by the German-American Physicist W. M. Elsässer \cite{Elsasser}, we may see system \eqref{ieq:c3mhd} as a system of transport equations: define the \textsl{Elsässer variables}  by
\begin{equation}\label{eq:RChangeVariables}
\al = u + b \qquad \text{and} \qquad \bt = u - b.
\end{equation}
By adding and substracting the momentum and magnetic field equations, we see that these variables solve the new problem, which we will refer to as the \textsl{Elsässer system},
\begin{equation}\label{eq:elsasser}
\begin{cases}
\partial_t \al + \D (\bt \otimes \al) + \nabla \pi_1 = 0\\
\partial_t \bt + \D (\al \otimes \bt) + \nabla \pi_2 = 0\\
\D (\al) = \D (\bt) = 0.
\end{cases}
\end{equation}
In the previous system, $\pi_1$ and $\pi_2$ are two (\textsl{a priori} distinct) scalar functions which enforce the independent divergence-free conditions $\D (\al) = \D (\bt) = 0$. In contrast to \eqref{ieq:c3mhd}, the Elsässer system \eqref{eq:elsasser} has the same number $2d+2$ of equations as of unknowns, which are $(\al, \bt, \pi_1, \pi_2)$. This difference is explained by noting that the structure of system \eqref{eq:elsasser} implies that we must have, say under mild integrability assumptions (see \cite{CF3} and Theorem \ref{t:c3Els} below), the equality of the ``pressure'' functions $\pi_1 = \pi_2$. This is consistent with the fact that, whenever we sum and substract the first two equations of \eqref{ieq:c3mhd}, we obtain \eqref{eq:elsasser} with $\pi_1 = \pi_2 = \pi + \frac{1}{2}|b|^2$.

As a set of transport equations, the Elsässer system is much easier to deal with than the ``classical'' ideal MHD system, since \eqref{ieq:c3mhd} only is (up to the non-local pressure term) a quasi-linear symmetric hyperbolic system\footnote{In contrast with transport equations, the basic theory of such systems only exists in $L^2$ based spaces. We refer to \cite{Brenner} for more insight on the this: a symmetric hyperbolic system with constant coefficients is well-posed in $L^p$, $p \neq 2$, if and only if the matrix coefficients all commute.}. As a matter of fact, all well-posedness results obtained for the ideal MHD equations have used, in one way or another, this change of variables (see \cite{Sch}, \cite{Secchi}, \cite{MY}, \cite{CF3}).

\medskip

The catch is that, although systems \eqref{ieq:c3mhd} and \eqref{eq:elsasser} are closely related, \textsl{they are not, strictly speaking, equivalent}. In \cite{CF3} (see Section 4), we had shown that equivalence between the two systems holds in the framework of $L^p$ solutions (with $p < +\infty$), and that it does not in for $L^\infty$ solutions. 

Let us explain further how this works: if $(\alpha, \beta)$ is a solution of \eqref{eq:elsasser} with pressures $\pi_1$ and $\pi_2$, then by setting $(u, b) = \frac{1}{2}(\alpha + \beta, \alpha - \beta)$ we construct a solution $u$ of the momentum equation with MHD pressure $\pi = \frac{1}{2}(\pi_1+ \pi_2)$ and a solution $b$ of 
\begin{equation}\label{eq:c3magPlusPress}
\partial_t b + \D (u \otimes b - b \otimes u) = \frac{1}{2} \nabla (\pi_2 - \pi_1)
\end{equation}
so that $b$ solves the magnetic field equation if and only if $\nabla \pi_1 = \nabla \pi_2$. Of course, there is not much possibility for this equality not holding: by taking the divergence of \eqref{eq:c3magPlusPress}, we see that the difference $Q = \pi_2 - \pi_1$ is a harmonic polynomial
\begin{equation*}
\frac{1}{2} \Delta (\pi_2 - \pi_1) = \partial_j \partial_k (u_j b_k - u_k b_j) = 0,
\end{equation*}
but this does not imply that $Q = 0$. Let us give an example inspired by the uniform flow \eqref{ieq:c2Unif} presented above: consider 
\begin{equation}\label{eq:c3UnifEls}
\alpha(t, x) = f(t) \qquad \text{and} \qquad \beta(t, x) = -f(t),
\end{equation}
where $f \in C^\infty(\R ; \R^d)$ a smooth function. Then $(\alpha, \beta)$ solves the Elsässer system with pressure functions $\pi_1(t, x) = -f'(t) \cdot x = - \pi_2(t, x)$ but we have $\frac{1}{2} \nabla (\pi_2 - \pi_1) = f'(t) \neq 0$. We see that we have the same kind of issue here we had with the Euler system above: in other words, though a solution of the ideal MHD equations \eqref{ieq:c3mhd} always define a solution of the Elsässer system \eqref{eq:elsasser} through \eqref{eq:RChangeVariables}, the converse is false and a solution of \eqref{eq:elsasser} does not necessarily provide a solution of the ideal MHD equations \eqref{ieq:c3mhd} through the reverse transformations $(u, b) = \frac{1}{2}(\alpha + \beta, \alpha - \beta)$.

\medskip

Our question is the following: \textsl{on what condition does a bounded solution of \eqref{eq:elsasser} also define a solution of the ideal MHD system \eqref{ieq:c3mhd}?}

\begin{rmk}
The solution \eqref{eq:c3UnifEls} displayed above offers an interesting discussion as to whether it is physically relevant. Written in physical variables, it reads
\begin{equation}\label{eq:c3UnifEls2}
u(t, x) = 0 \qquad \text{and} \qquad b(t, x) = 2 f(t).
\end{equation}
On the one hand, it would seem that this solution has an obvious physical interpretation: a uniform magnetic field may be created in the interior of an infinite ideal cylindrical solenoid, so that \eqref{eq:c3UnifEls2} could be seen as generated by solenoidal-type currents ``at infinity''.

However, it turns out that this is an acceptable solution of the Maxwell equations (even in the non-relativistic regime suitable for MHD, see \cite{LebeLev}) only for non-variable fields $f'(t) = 0$. Because time variations of the magnetic field induce a nonzero electromotive force (by Faraday's law), the presence of an electric field $e$ only is possible if the fluid is in motion, as $e = - u \times b$.
\end{rmk}

\subsection{Weak Solutions of the MHD equations}

In this paragraph, we define the appropriate notions of weak solutions of systems \eqref{ieq:c3mhd} and \eqref{eq:elsasser}. We start by the usual MHD system \eqref{ieq:c3mhd} before doing so for the Elsässer system.

\begin{defi}\label{d:c3mhdWeak}
Let $T > 0$ and $u_0, b_0 \in L^\infty$ be two divergence-free functions. A couple $(u, b) \in L^2_{\rm loc} ([0, T[; L^\infty)$ is said to be a \textsl{bounded weak solution} of \eqref{ieq:c3mhd}\index{bounded weak solution!of the ideal MHD equations}, related to the initial data $(u_0, b_0)$ if there exists a (MHD) pressure $\pi \in \mc D'([0, T[ \times \R^d)$ such that
\begin{enumerate}[(i)]
\item the momentum equation is satisfied in the sense of distributions:
\begin{equation*}
\partial_t u + \D (u \otimes u - b \otimes b) + \nabla \pi = \delta_0(t) \otimes u_0(x) \qquad \text{in } \mc D'([0, T[ \times \R^d) \, ;
\end{equation*}
we refer to Definition \ref{d:c2weakEuler} for the meaning of the tensor product $\delta_0 \otimes u_0 \;$;
\item the magnetic field equation is satisfied in the sense of distributions: 
\begin{equation*}
\partial_t b + \D (u \otimes b - b \otimes u) = \delta_0(t) \otimes b_0(x) \qquad \text{in } \mc D'([0, T[ \times \R^d),
\end{equation*}
\item the divergence-free condition $\D (u) = 0$ holds in $\mc D' (]0, T[ \times \mathbb{R}^d)$.
\end{enumerate}
\end{defi}

\begin{defi}\label{d:c3mhdElsWeak}
Let $T > 0$ and $\al_0, \bt_0 \in L^\infty$ be two divergence-free functions. A couple $(\al, \bt) \in L^2_{\rm loc}([0, T[ ; L^\infty)$ is said to be a \textsl{bounded weak solution} of \eqref{eq:elsasser} related to the initial data $(\al_0, \bt_0)$ if there exist two pressure functions $\pi_1, \pi_2 \in \mc D'([0, T[ \times \R^d)$ such that
\begin{enumerate}[(i)]
\item the equation for $\al$ is satisfied in the sense of distributions:
\begin{equation*}
\partial_t \alpha + \D(\beta \otimes \alpha) + \nabla \pi_1 = \delta_0(t) \otimes \alpha_0(x) \qquad \text{in } \mc D'([0, T[ \times \R^d),
\end{equation*}
and likewise for $\bt$;
\item both divergence-free conditions $\D(\al) = \D(\bt) = 0$ hold in $\mc D' (]0, T[ \times \mathbb{R}^d)$.
\end{enumerate}
\end{defi}

\begin{rmk}
As above with the Euler system, we have the same ambiguity concerning initial data. If $(u, b)$ is as in Definition \ref{d:c3mhdWeak}, then there is no need for $u_0$ and $u(0)$ to be the same, even if the solution lies in $C^1_T(L^\infty)$. Likewise, there is no reason for $u$ to be continuous with respect to time. However, the magnetic field has a much more pleasant behavior: it automatically lies in $W^{1, 1}_T(W^{-1, \infty})$, and we have $b_0 = b(0)$. 

As for the Elsässer variables $(\al, \bt)$ in Definition \ref{d:c3mhdElsWeak}, they share the same problems the Euler solutions may have: both initial values $\alpha(0)$ and $\beta(0)$ may differ from the initial data $\alpha_0, \beta_0$, and both $\al$ and $\bt$ may be discontinuous with respect to time.
\end{rmk}

\begin{rmk}
As we have noted in \cite{CF3} (Section 4.1), to define weak solutions, it is only necessary, strictly speaking, that $u \otimes u - b \otimes b$ and $u \otimes b - b \otimes u$ be in $L^1_{\rm loc}(\R^d ; \R^d \otimes \R^d)$. Interestingly, this is absolutely equivalent to $\alpha \otimes \beta$ being locally integrable. But we will not need such a level of generality.
\end{rmk}

\subsection{Equivalence of the Systems}

The following statement is a sharp result for the equivalence of systems \eqref{ieq:c3mhd} and \eqref{eq:elsasser} in the framework of bounded solutions.

\begin{thm}\label{t:c3Els}
Let $T > 0$, $\al_0, \bt_0 \in L^\infty$ and $(\al, \bt) \in C^0 \big( [0, T[ ; L^\infty \big)$. Define $(u, b) = \frac{1}{2}(\alpha + \beta, \alpha - \beta)$, and $(u_0, b_0)$ accordingly. The assertions below are true.
\begin{enumerate}
\item Assume that $(u, b)$ is a weak solution of \eqref{ieq:c3mhd} that is related to the initial data $(u_0, b_0)$, in the sense of Definition \ref{d:c3mhdWeak}. Then $(\al, \bt)$ solves the Elsässer system \eqref{eq:elsasser} with initial data $(\al_0, \bt_0)$ and $\pi_1 = \pi_2 = \pi$.
\item Assume that $(\al, \bt)$ is a weak solution of \eqref{eq:elsasser} for the initial data $(\al_0, \bt_0)$. Then the following statements are equivalent:
\begin{enumerate}[(i)]
\item the functions $(u, b)$ solve the ``classical'' MHD system \eqref{ieq:c3mhd} with initial data $(u_0, b(0))$, and the difference $b(0) - b_0$ is a constant function;
\item we have, for all times $t \in [0, T[$, $b(t) - b(0) \in \mc S'_h$.
\end{enumerate}
In addition, if one of these equivalent conditions is fulfilled, then the pressure gradients $\nabla \pi_1$ and $\nabla \pi_2$ are equal.
\end{enumerate}
\end{thm}

\begin{proof}
Because the proof of this theorem bears strong similarities with that of Theorem \ref{t:c2Euler}, we only give an outline of the arguments.

\medskip

If $(u, b)$ is a weak solution of \eqref{ieq:c3mhd}, then adding and subtracting the weak forms of the momentum and magnetic field equations makes it clear that $(\al, \bt)$ solves the Elsässer system with initial data $(\al_0, \bt_0)$ with $\pi_1 = \pi_2$.

\medskip

On the other hand, assume that $(\al, \bt)$ is a solution of the Elsässer system \eqref{eq:elsasser}, according to Definition \ref{d:c3mhdElsWeak}, and with the initial data $(\al_0, \bt_0)$. As in the proof of Theorem \ref{t:c2Euler}, extend the solution $(\al, \bt, \pi_1, \pi_2)$ to $]- \infty, T[ \times \mathbb{R}^d$ and continue to note $(\al, \bt, \pi_1, \pi_2)$ the extension. By subtracting both equations for $\al$ and $\bt$, we see that the magnetic field satisfies 
\begin{equation*}
\partial_t b + \D (u \otimes b - b \otimes u) = \frac{1}{2} \nabla (\pi_2 - \pi_1) + \delta_0 (t) \otimes b_0(x).
\end{equation*}
As with $C^0_T(L^\infty)$ solutions of the Euler system, we apply the same mollification sequence $\big( K_\epsilon \big)_{\epsilon > 0}$ in order to deal with low regularity in the time variable. We have, by noting $(u_\epsilon, b_\epsilon) = K_\epsilon * (u, b)$ the convolution with respect to time,
\begin{equation*}
\partial_t b_\epsilon + K_\epsilon * \D (u \otimes b - b \otimes u) = \frac{1}{2} K_\epsilon * \nabla (\pi_2 - \pi_1) + K_\epsilon (t) b_0 (x) \qquad \text{in } ]- \infty, T - \epsilon[ \times \R^d.
\end{equation*}
By taking the divergence of this equation, we see that $K_\epsilon * \Delta (\pi_2 - \pi_1) = 0$, hence the functions $Q_\epsilon (t) = K_\epsilon * (\pi_2 - \pi_1) (t) \in \R[X]$ are harmonic polynomials. As in the proof of Theorem \ref{t:c2Euler}, we see that $\nabla Q_\epsilon = K_\epsilon * \nabla Q$ for some distribution $\nabla Q \in \mc D' (\R \times \R^d)$. We must find a condition under which we have $\nabla Q = 0$ in $]0, T[ \times \R^d$.

\medskip

We now notice that the quadratic term in the magnetic field equation lies in $\mc S'_h$, as shown by the first Bernstein inequality (Lemma \ref{l:c2bern}). For all times $t \in ]0, T[$,
\begin{equation*}
\left\| \chi (\lambda D) \D (u \otimes b - b \otimes u) \right\|_{L^\infty} = O \left( \frac{1}{\lambda} \right) \qquad \text{as } \lambda \rightarrow + \infty.
\end{equation*}
Therefore, by arguing with $b$ exactly as with $u$ at the end of the proof of Theorem \ref{t:c2Euler}, we get the equivalence of points \textit{(i)} and \textit{(ii)}.
\end{proof}

As with the Euler equations, well-posedness of a projected version of the ideal MHD system has been studied in the critical Besov space $B^1_{\infty, 1}$ (see the work of Miao and Yuan \cite{MY}). In the ideal MHD equations \eqref{ieq:c3mhd}, the pressure only appears in the momentum equation. This means that, in order to apply the Leray projection  we must require the velocity field $u$ to satisfy a special condition. The resulting system is
\begin{equation}\label{eq:mhdProjoU}
\begin{cases}
\partial_t u + \P \D (u \otimes u - b \otimes b) = 0\\
\partial_t b + \D (u \otimes b - b \otimes u) = 0,
\end{cases}
\end{equation}
The projected system \eqref{eq:mhdProjoU} is trivially equivalent to a projected system for the Elsässer variables, namely
\begin{equation}\label{eq:ElsProj}
\begin{cases}
\partial_t \al + \P \D (\bt \otimes \al) = 0\\
\partial_t \bt + \P \D (\al \otimes \bt) = 0.
\end{cases}
\end{equation}
However, solutions $(\al, \bt) \in C^0_T(L^\infty)$ of the Elsässer system, as in Definition \ref{d:c3mhdElsWeak}, solve these projected equations, with appropriate initial data, if and only if
\begin{equation*}
\big( \al(0), \bt(0) \big) = (\al_0, \bt_0) \qquad \text{and} \qquad \big( \al(t) - \al(0), \bt(t) - \bt(0) \big) \in \mc S'_h \text{ for } t \in [0, T[,
\end{equation*}
so that Theorem \ref{t:c3Els} remains a necessary step to recast the Elsässer system \eqref{ieq:c3mhd} into the classical MHD one. Relations between these different systems are summarized in the following diagram.
\begin{picture}(500, 90)
\put(150, 70){Els}
\put(170, 73){\vector(1, 0){68}}
\qbezier(170, 68)(204, 50)(238, 68)
\put(170, 68){\vector(-2, 1){0}}
\put(200, 76){$\al, \bt$}
\put(148, 10){MHD}
\put(160, 68){\vector(0, -1){48}}
\qbezier(155, 68)(137, 44)(155, 20)
\put(155, 68){\vector(1, 2){0}}
\put(164, 40){$b$}
\put(242, 70){$\P( {\rm Els})$}
\put(237, 10){$\P ({\rm MHD})$}
\put(177, 13){\vector(1, 0){55}}
\qbezier(170, 8)(204, -10)(238, 8)
\put(170, 8){\vector(-2, 1){0}}
\put(255, 68){\vector(0, -1){48}}
\put(200, 16){$u$}
\put(260, 20){\vector(0, 1){48}}
\end{picture}
In this diagram, Els stands for the Elsässer system \eqref{eq:elsasser} while MHD stands for the classical MHD system \eqref{ieq:c3mhd}. The names $\P({\rm Els})$ and $\P({\rm MHD})$ refer to, respectively, the projected equations \eqref{eq:ElsProj} and \eqref{eq:mhdProjoU}. The arrows are labeled with the conditions required to pass from a system to another, bare arrows need no conditions.

\begin{rmk}
In \cite{Kukavica} and \cite{KV}, it is shown\footnote{The authors of \cite{Kukavica} and \cite{KV} work with the Navier-Stokes equations, but minor adaptations allow to recover the same result in the context of the Euler equations, see Remark \ref{r:EulerGalilean} above} that any bounded solution of the Euler system \eqref{eq:c2Euler} can be obtained from a solution of the projected system \eqref{eq:c2EulerP} by means of a ``generalized Galilean transformation'' (see \eqref{ieq:c2GenGalInv} in the Introduction). We may wonder how much of this extends to MHD.

Firstly, ideal MHD, just as the Euler equations, is invariant under the operation of the Galileo group, provided the unknowns transform as
\begin{equation*}
u'(t,x) = u(t, x-Vt) + V, \quad b'(t, x) = b(t, x - Vt), \quad \pi'(t, x) = \pi(t, x - Vt).
\end{equation*}
This set of transformations is consistent with the non-relativistic nature of the MHD equations: they are deduced from the Euler system (which follows from Newtonian mechanics) and the \textsl{magnetic limit} of the Maxwell equations, that is the non-relativistic limit of the Maxwell equations in which the magnetic field dominates over the electric field $c|b| \gg |e|$ (contrary to the \textsl{electric limit} in which the converse holds). We refer to \cite{LebeLev}, \cite{Heras} and \cite{PFM} for more on this interesting topic. Our dissertation \cite{CobbPhD} also contains some comments on how these notions apply to MHD (pp. 28--31).

As a consequence, it is true that every solution $(u, b, \pi)$ of the Ideal MHD system \eqref{ieq:c3mhd}, labeled MHD in the diagram above, can be deduced from a solution $(U, B)$ of projected MHD system \eqref{eq:mhdProjoU} by a ``generalized Galilean transform''
\begin{equation*}
u(t, x) = U(t, x - F(t)) + f(t), \quad b(t, x) = B(t, x - F(t)), \quad \pi(t,x) = \Pi(t, x - F(t)) - f'(t) \cdot x,
\end{equation*}
where $F'(t) = f(t)$ and $\Pi = (- \Delta)^{-1} \partial_j \partial_k (U_j U_k - B_j B_k)$ is the pressure associated to $(U, B)$ as a solution of the projected system \eqref{eq:mhdProjoU}.

However, it is not obvious at all that there is a way to recover solutions $(\alpha, \beta, \pi_1, \pi_2)$ of the Elsässer system \eqref{eq:elsasser} in terms of solutions of the ideal MHD equations \eqref{ieq:c3mhd} and the operation of an appropriate invariance group.

\end{rmk}

\addcontentsline{toc}{section}{References}
{\small

}

%
%
%
%
%
%
%
%
%
%
%

\end{document}